\title{Chromatic number and complete graph substructures for degree sequences}
\author{Zden\v{e}k Dvo\v{r}\'ak\thanks{Supported in part through a postdoctoral
   position at Simon Fraser University and by the grant GA201/09/0197 of Czech Science Foundation.}~\thanks{On leave from: 
   Institute of Theoretical Informatics,
   Charles University, Prague, Czech Republic.}\\
  {Department of Mathematics}\\
  {Simon Fraser University}\\
  {Burnaby, B.C. V5A 1S6} \\
  email: {\tt rakdver@kam.mff.cuni.cz}
\and
  Bojan Mohar\thanks{Supported in part by an NSERC Discovery Grant (Canada),
  by the Canada Research Chair program, and by the
  Research Grant P1--0297 of ARRS (Slovenia).}~\thanks{On leave from:
  IMFM \& FMF, Department of Mathematics, University of Ljubljana, Ljubljana,
  Slovenia.}\\
  {Department of Mathematics}\\
  {Simon Fraser University}\\
  {Burnaby, B.C. V5A 1S6} \\
  email: {\tt mohar@sfu.ca}
}
\newtheorem{theorem}{Theorem}[section]
\newtheorem{lemma}[theorem]{Lemma}
\newtheorem{conjecture}[theorem]{Conjecture}
\newcommand{\DEF}[1]{{\em #1\/}}
\def\R {{\cal R}}
\begin{document}

\maketitle

%%%%%%%%%%%%%%%%%%%%%%%%%%%%%%%%%%%%%%%%%%
\begin{abstract}
Given a graphic degree sequence $D$, let $\chi(D)$ (respectively $\omega(D)$,
$h(D)$, and $H(D)$) denote the maximum value of the chromatic number 
(respectively, the size of the largest clique, largest clique subdivision, and
largest clique minor) taken over all simple graphs whose degree sequence is $D$.
It is proved that $\chi(D)\le h(D)$. 
Moreover, it is shown that a subdivision of a clique of order $\chi(D)$ 
exists where each edge is subdivided at most once and the set of
all subdivided edges forms a collection of disjoint stars. This bound 
is an analogue of the Haj\'os Conjecture for degree sequences and,
in particular, settles a conjecture of Neil Robertson that degree sequences
satisfy the bound $\chi(D)\le H(D)$ (which is related to the Hadwiger Conjecture).
It is also proved that $\chi(D)\le \tfrac{6}{5}\,\omega(D)+\tfrac{3}{5}$ and 
that $\chi(D) \le \frac{4}{5}\,\omega(D) + \frac{1}{5}\Delta(D) + 1$,
where $\Delta(D)$ denotes the maximum degree in $D$. The latter inequality is
a strengthened version of a conjecture of Bruce Reed.
All derived inequalities are best possible.
\end{abstract}
%%%%%%%%%%%%%%%%%%%%%%%%%%%%%%%%%%%%%%%%%%

{\bf Keywords:} Degree sequence, maximum clique, clique minor, clique subdivision,
Hajos Conjecture, Hadwiger Conjecture, Reed Conjecture.

%%%%%%%%%%%%%%%%%%%%%%%%%%%%%%%%%%%%%%%%%%

\section{Introduction}

All graphs considered in this paper are simple, without loops or parallel edges.
A multiset of non-negative integers, usually written in the form of a non-increasing
sequence $d_1\ge d_2\ge \cdots\ge d_n$, is called a \DEF{graphic degree sequence}
if there exists a simple graph $G$ of order $n$ whose vertex degrees are 
$d_1,d_2,\dots,d_n$. Given a graph $G$, we denote by $D(G)$ its degree sequence,
and given a degree sequence $D$, we let
$$
   \R(D) = \{ G\mid D(G)=D \}
$$
denote the set of all \DEF{realizations} of $D$.
By $\delta(D)$ and $\Delta(D)$ we denote the minimum and the maximum degree
of $D$, respectively.

S.~B.~Rao introduced the following ordering for degree sequences: 
$D\preceq D'$ if there exist $G\in \R(D)$ and $G'\in\R(D')$ such that $G$ is 
an induced subgraph of $G'$. 
Motivated by the progress made by Robertson and Seymour \cite{RS20} on 
the well-quasi-ordering of graphs ordered by the graph minors relation,
he proposed the following conjecture.

\begin{conjecture}[S.~B. Rao, 1981]
\label{conj:Rao}
The degree sequences are well-quasi-ordered with respect to the relation $\preceq$.
\end{conjecture}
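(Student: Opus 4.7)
The plan is to run a minimal-bad-sequence argument in the style of Nash--Williams. Assume for contradiction that the set $\mathcal{D}$ of graphic degree sequences is not well-quasi-ordered by $\preceq$, and choose a bad sequence $D_1,D_2,\dots$ in which each $D_i$ has the fewest possible vertices (equivalently, the smallest possible sum $\sum_j d_j$) subject to $D_1,\dots,D_{i-1}$ already being the initial segment of some bad sequence. The aim is to extract enough regularity from this minimal sequence so that Higman's lemma (or Kruskal's theorem) produces $i<j$ with $D_i\preceq D_j$, contradicting badness.

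Before invoking Higman, I would perform several normalizing reductions. Isolated vertices (entries equal to $0$) can be stripped off on both sides, since they are automatically accommodated by any realization; dually, universal vertices (entries equal to $n-1$) can be handled by a pigeonhole on their number. After these clean-ups one may assume $1\le\delta(D_i)$ and $\Delta(D_i)\le|V(D_i)|-2$ throughout the sequence. A further useful reduction fixes, for each $D_i$, a canonical realization $G_i\in\R(D_i)$ together with a distinguished vertex $v_i$ of minimum degree; the object $(G_i-v_i,N_{G_i}(v_i))$ is a smaller degree sequence equipped with a labelled boundary, and the family of such boundary-labelled sequences of bounded boundary size is, by induction on the reduction parameter, WQO.

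Applying Higman's lemma to the word obtained by iterating this peeling operation yields, for some $i<j$, an embedding of the reduced data of $D_i$ into that of $D_j$. From this one would reconstruct an induced subgraph $G_i\hookrightarrow G_j^{\prime}$ for some $G_j^{\prime}\in\R(D_j)$, the freedom to replace $G_j$ by any other element of $\R(D_j)$ being essential: $2$-switches preserve the degree sequence and can be composed to realign the image of the embedding with the peeling order, in the spirit of Rao's own switching structure theory for $\R(D)$.

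The main obstacle, and the reason the conjecture is genuinely difficult, is exactly this reconstruction step. The relation $\preceq$ carries a double quantifier: the witnessing realization $G_j^{\prime}\in\R(D_j)$ may depend on $G_i$, yet the embedding must be \emph{induced}, so non-edges of $G_i$ become global constraints on $G_j^{\prime}$. Controlling these constraints while performing the switches needed to match the inductive embedding is where the whole argument concentrates; I expect one needs a fine analysis of switch-classes of $\R(D_j)$ relative to prescribed induced non-edges, and a compactness or exchange lemma ensuring that such a switch-class is non-empty whenever the reduced data embed. This is the step where a proof along the above lines would either succeed or require substantially heavier machinery.
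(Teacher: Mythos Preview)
The paper does not contain a proof of this statement: it is recorded as a conjecture (Conjecture~\ref{conj:Rao}), and the only further remark is that Chudnovsky and Seymour have announced a proof elsewhere. There is therefore no proof in the paper to compare your attempt against.

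On its own merits, your proposal is not a proof but a plan with its central step left open. The minimal-bad-sequence scaffolding and the appeal to Higman's lemma are routine; the entire content of Rao's conjecture resides in what you yourself call ``the main obstacle'': given that the reduced (peeled, boundary-labelled) data of $D_i$ embed into those of $D_j$, produce realizations $G_i\in\R(D_i)$ and $G_j'\in\R(D_j)$ with $G_i$ an \emph{induced} subgraph of $G_j'$. You do not supply the exchange or compactness lemma that would accomplish this, and you explicitly concede that it ``would either succeed or require substantially heavier machinery''. That is the gap, and it is the whole problem: nothing in the earlier reductions (stripping isolated or universal vertices, peeling a minimum-degree vertex, invoking $2$-switches) constrains the non-edges of a putative image enough to force an induced embedding. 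As far as is publicly known, the Chudnovsky--Seymour argument does not proceed via a peeling/$2$-switch scheme of this kind but through structural decomposition techniques, so even as a roadmap your outline points in a different direction from the known proof.
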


Recently, Chudnovsky and Seymour \cite{CS} announced a proof of this conjecture.

Let us introduce the following notation. If $D$ is a degree sequence, we let
$\chi(D)$ (respectively $\omega(D)$, $h(D)$, and $H(D)$) denote the maximum value 
of the chromatic number (respectively, the size of the largest clique, 
largest clique subdivision, and largest clique minor) taken over all graphs 
in $\R(D)$. Let us observe that $\omega(D)\le h(D)\le H(D)$.

Motivated by Rao's conjecture, Neil Robertson proposed a conjecture on degree
sequences that is a relaxation of the famous Hadwiger Conjecture claiming that
every graph with chromatic number $k$ contains a $k$-clique as a minor.
Despite many attempts, the Hadwiger Conjecture remains open, thus its relaxations 
are of high interest.

\begin{conjecture}[Robertson \cite{Ro}]
\label{conj:Robertson}
For every graphic degree sequence $D$, we have $\chi(D)\le H(D)$.
\end{conjecture}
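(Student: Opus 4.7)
The plan is to prove the stronger statement $\chi(D)\le h(D)$, which directly implies Robertson's conjecture since $h(D)\le H(D)$, and in fact to construct in some realization of $D$ a $K_k$-subdivision of the special ``star-like'' form promised in the abstract, where $k=\chi(D)$. The two basic tools are (i) taking a colour-critical subgraph to obtain a large minimum degree, and (ii) \emph{2-switches} (replace non-adjacent edges $ab,cd$ by $ac,bd$), which preserve the degree sequence and connect all of $\R(D)$.

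First I would fix $k=\chi(D)$ and choose $G\in\R(D)$ with $\chi(G)=k$. Passing to a $k$-critical subgraph $H\subseteq G$ gives $\delta(H)\ge k-1$. If $H=K_k$ we are done, so I may pick $v\in V(H)$ with $\deg_H(v)=k-1$ and set $N_H(v)=\{v_1,\ldots,v_{k-1}\}$. The set $B=\{v,v_1,\ldots,v_{k-1}\}$ is the natural candidate for the branch set of a $K_k$-subdivision: the $k-1$ edges at $v$ are already present, and only the pairs $v_iv_j$ missing from $G$ need to be realized as internally disjoint paths. Because the abstract promises that each edge is subdivided at most once and the subdivided edges form disjoint stars in $K_k$, I would look for each missing pair to be either an edge of $G$ or a path of length $2$ through an external vertex $u\notin B$, with the family of such $u$'s appearing in a star pattern.

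To achieve this I would argue \emph{extremally}. Among all $G'\in\R(D)$ with $\chi(G')=k$ and all choices of a critical subgraph and a vertex $v$ as above, pick $(G',v)$ maximising a parameter that measures how close $N(v)$ is to being a clique (e.g.\ the number of edges of $G'$ inside $N(v)$, with ties broken to prefer star-pattern missing edges). For this extremal realisation I expect that every missing pair $v_iv_j$ can either be created by a 2-switch (contradicting extremality unless the switch drops $\chi$ below $k$) or, if no helpful switch exists, the blocking configuration itself exhibits an external vertex $u$ adjacent to both $v_i$ and $v_j$, giving the length-$2$ connecting path. The star-disjointness of the subdivided edges should then be read off from the structure of these blockers, since any two length-$2$ paths sharing an internal vertex could be recombined by a 2-switch to reduce the number of subdivisions.

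The main obstacle is maintaining $\chi=k$ while performing the 2-switches: a switch that produces the desired edge $v_iv_j$ may well make the new graph $(k-1)$-colourable. The technical heart of the argument will therefore be a case analysis, given a hypothetical $(k-1)$-colouring $c$ of a switched graph, of how $c$ restricts to $H$ and to $B$: either $c$ can be lifted to a proper $(k-1)$-coloring of $G$ itself (contradicting $\chi(G)=k$), or the obstruction identifies a specific external vertex $u$ that witnesses $v_iv_j$ via a length-$2$ path in $G$, in the required star-disjoint configuration. A secondary obstacle is coordinating these paths over all missing pairs simultaneously, which I expect to resolve greedily, processing missing edges in an order dictated by the star-decomposition of the ``missing-edge'' graph on $\{v_1,\ldots,v_{k-1}\}$.
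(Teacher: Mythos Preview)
Your proposal has a genuine gap at its technical heart. The whole argument hinges on the claim that, in the extremal realization, whenever a 2-switch creating the missing edge $v_iv_j$ drops the chromatic number below $k$, the hypothetical $(k-1)$-colouring of the switched graph either lifts to $G$ or ``identifies a specific external vertex $u$ that witnesses $v_iv_j$ via a length-$2$ path''. There is no mechanism for this. A 2-switch replacing $v_ib,\,v_jd$ by $v_iv_j,\,bd$ makes two former edges into non-edges; a $(k-1)$-colouring of the new graph may simply assign $c(v_i)=c(b)$ and $c(v_j)=c(d)$, which tells you nothing about common neighbours of $v_i$ and $v_j$ in $G$. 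Nor does the extremal choice help: maximising $e(G'[N(v)])$ over realizations with $\chi(G')=k$ gives no control over what happens outside $N(v)$, and a single switch can simultaneously destroy an edge of your critical subgraph $H$ while lowering the chromatic number. The ``star-disjointness'' of the length-$2$ paths is likewise asserted rather than argued. As written this is a plan, not a proof, and the plan does not close.

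The paper's route is entirely different and avoids 2-switches altogether. The key step is a structural reduction (Lemma~\ref{lemma-decomp}, proved via the Edmonds--Gallai decomposition applied to the \emph{complement}): every graph has an induced subgraph of the same chromatic number which is a join of \emph{basic} graphs, where a nontrivial basic graph has exactly $2m+1$ vertices, $\chi=m+1$, $\omega(D)=m$, and hypo-matchable complement. Since $h_1$ is super-additive over joins and trivial basic graphs satisfy $\chi\le\omega(D)$, one only needs to handle this extremely constrained case $n=2k-1$, $\delta\ge k-1$. For such a degree sequence the paper then \emph{builds a realization from scratch}: two cliques of sizes $m$ and $m+1$, minus a carefully chosen tree, plus a bipartite graph between them obtained from a degree-sequence lemma (Lemma~\ref{lemma-match}) that guarantees a matching covering the smaller side. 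That matching supplies all the length-$2$ paths from a single branch vertex $v_{m+1}$, so the subdivided edges form one star. Your approach never bounds $n$ in terms of $k$ and never isolates a tractable class of degree sequences, which is precisely what makes the explicit construction possible.
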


In a recent work, Robertson and Song \cite{RS} proved a special case of
Conjecture~\ref{conj:Robertson} when the degree sequence contains at most two 
distinct degree values.

In this paper we consider a stronger version of Conjecture~\ref{conj:Robertson}
that is related to the Haj\'os Conjecture:

\begin{conjecture}
\label{conj:Hajos}
For every graphic degree sequence $D$, we have $\chi(D)\le h(D)$.
\end{conjecture}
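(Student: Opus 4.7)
\emph{Proof plan.} Let $k := \chi(D)$, and fix some $G_0 \in \R(D)$ with $\chi(G_0) = k$. Since $\chi(G_0) = k$, the degeneracy of $G_0$ is at least $k-1$, so $G_0$ contains a subgraph $H_0$ with $\delta(H_0) \ge k-1$ (for instance, any $k$-critical subgraph). Thus $H_0$ has at least $k$ vertices, each with at least $k-1$ neighbors in $G_0$; these will furnish the branch and intermediate vertices of a $K_k$-subdivision in some realization of $D$.

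The argument is extremal, carried out over the whole family $\R(D)$. Call a \emph{configuration} any tuple $(G, B, S, \phi)$ where $G \in \R(D)$, $B = \{v_1, \dots, v_k\} \subseteq V(G)$, $S$ is a graph on vertex set $B$ (the ``subdivided edges''), and $\phi \colon E(S) \to V(G) \setminus B$ is an injection such that $\phi(v_iv_j)$ is adjacent in $G$ to both $v_i$ and $v_j$ for every $v_iv_j \in E(S)$, while $v_iv_j \in E(G)$ whenever $v_i,v_j \in B$ and $v_iv_j \notin E(S)$. Such a configuration is exactly a $K_k$-subdivision in $G$ with each edge subdivided at most once, and $S$ is the set of subdivided edges. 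A first step is to exhibit some configuration: starting from $H_0$, greedily extract $v_1,\dots,v_k$, using $\delta(H_0)\ge k-1$ to pick distinct common neighbors for ``missing'' pairs; a preliminary round of $2$-switches (which preserve $D$) may be needed to ensure enough pairs within $B$ are directly adjacent.

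Now fix a configuration minimizing $|E(S)|$. We claim $S$ is a star forest. Suppose not; then $S$ contains a path $v_a v_b v_c v_d$ of length three. The plan is to find a $2$-switch (or a short sequence of them) that preserves $D$, turns the subdivided edge $v_bv_c$, say, into a direct edge, and leaves the rest of the configuration intact or cheaply repairable, thereby decreasing $|E(S)|$ and contradicting minimality. The non-edge $v_bv_c$ and the abundant neighborhoods of $v_b$ and $v_c$ should yield, by a pigeonhole on their neighborhoods and the injectivity of $\phi$, a pair of edges whose swap inserts $v_bv_c$ into $E(G)$ without disturbing the rest of $B \cup \phi(E(S))$, possibly reassigning $\phi$ on the freed edge.

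The main obstacle is precisely this swap analysis: one must show that whenever $S$ is not a star forest, a valid $2$-switch is available, and that after the switch the rest of the configuration can be salvaged. The star-forest condition is what rules out ``entangled'' subdivided edges that would block such local moves; a length-three path in $S$ forces two branch vertices ($v_b$ and $v_c$) each to carry an extra subdivided incidence, which is the leverage used to produce the needed non-adjacencies. Once the extremal configuration is shown to have star-forest $S$, its graph $G$ is a realization of $D$ containing the claimed subdivision, so $h(D)\ge k = \chi(D)$, establishing Conjecture~\ref{conj:Hajos} in the strong form stated in the abstract.
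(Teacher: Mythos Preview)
Your plan has a genuine gap at the very first step: the existence of \emph{any} configuration is not established, and that is essentially the whole theorem. From $\delta(H_0)\ge k-1$ you cannot greedily extract a $K_k$-subdivision with each edge subdivided at most once. If $v_i v_j\notin E(G)$, each of $v_i,v_j$ has $\ge k-1$ neighbours, but nothing forces a \emph{common} neighbour, let alone one outside $B$ and distinct from those already used by $\phi$. The vague ``preliminary round of $2$-switches'' is precisely where the difficulty lies; in the extremal example $D=(5k-3)^{5k}$ with $\chi(D)=3k$ and only $5k$ vertices, at most $2k$ edges of the $K_{3k}$ can be subdivided, so you need the $3k$ branch vertices to span almost a full clique in some realization --- this is not something a local switching argument yields without real work. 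The second step (improving a minimal configuration by a $2$-switch when $S$ contains a $P_4$) is likewise only a hope: you write that pigeonhole ``should yield'' a valid swap, but you give no argument that the freed and inserted edges can be chosen to avoid destroying other branch--branch adjacencies or existing $\phi$-paths.

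The paper's proof takes an entirely different route. It first proves a structural decomposition (Lemma~\ref{lemma-decomp}, via the Edmonds--Gallai theorem) showing that any $G$ has an induced subgraph of the same chromatic number which is a join of \emph{basic} graphs; a nontrivial basic graph has $n=2m+1$ vertices, $\chi=m+1$, $\omega(D)=m$, and its complement is hypo-matchable. Since $h_1$ is additive over joins and $h_1\ge\omega$, this reduces the problem to nontrivial basic graphs. For those, Lemma~\ref{lemma-largecl} gives a numerical constraint on the degree sequence, and the proof of Theorem~\ref{thm-hajos} then \emph{explicitly constructs} a realization $G'$ in which $v_1,\dots,v_m$ form a clique and $v_{m+1}$ is joined to each of them by a path of length at most two, so all subdivided edges are incident with a single vertex. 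No extremal or switching argument is used; the construction is direct, built from two cliques, a carefully chosen tree/forest of deleted edges, and a bipartite graph with a perfect matching supplied by Lemma~\ref{lemma-match}.
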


This conjecture is particularly interesting, not only because it strengthens
Conjecture~\ref{conj:Robertson}, but also because the Haj\'os Conjecture for
graphs fails (Catlin \cite{Cat}; see also \cite{Th1,Th2}).

The main result of our work is a proof of Conjecture~\ref{conj:Hajos}
(see Theorems \ref{thm-hajos} and \ref{thm-hajos2}). 
It is shown that this conjecture holds in a quite strong
way. Namely, we prove that there is a graph $G\in \R(D)$ containing a subdivided
complete graph of order $\chi(D)$ such that each edge is subdivided at most once 
and the set of all subdivided edges forms a collection of disjoint stars. 
This, in particular, settles Conjectures~\ref{conj:Robertson} and \ref{conj:Hajos}.

We also address a question how close to $\chi(D)$ is the maximum clique
number $\omega(D)$. We prove (cf.\ Theorem~\ref{thm-sf}) that 
\begin{equation}
\label{eq:omega}
  \chi(D)\le\tfrac{6}{5}\,\omega(D)+\tfrac{3}{5}
\end{equation}
holds for every degree sequence $D$.
If $\chi(D)\le \tfrac{1}{2}n(D)$, where $n(D)$ is the
number of vertices for the degree sequence $D$, and if $\delta(D)\ge\chi(D)-1$,
then one can prove that $\omega(D)=\chi(D)$.
However, the situation changes when $\chi(D) > \tfrac{1}{2}n(D)$. For example,
any realization of the degree sequence $D=(5k-3)^{5k}$ (i.e., $d_i=5k-3$
for $i=1,\dots,5k$) is a complement 
of a union of cycles, thus it has $\chi(D)=3k$ (realized by
a join of $5$-cycles) and $\omega(D)=\lfloor\frac{5k}{2}\rfloor$ 
(realized by a complement of a $5k$-cycle).
If $k$ is odd, then $\chi(D)=\frac{6}{5}\omega(D)+\frac{3}{5}$.
This example shows that the inequality (\ref{eq:omega}) is best possible. 

Finally, we consider an analogue of Reed's Conjecture \cite{R}
(cf.\ Section~\ref{sect:3})
bounding the chromatic number by a convex combination of the clique number and
the maximum degree. Our Theorem \ref{thm-reed} shows that
$$
   \chi(D) \le \tfrac{4}{5}\,\omega(D) + \tfrac{1}{5}\Delta(D) + 1.
$$
This bound is best possible in the sense that equality holds for infinitely
many graphic sequences $D$ and that for every $\alpha > \tfrac{4}{5}$ and
every $\beta$, there exists a degree sequence $D$ such that
$$
   \chi(D) > \alpha\,\omega(D) + (1-\alpha)\Delta(D) + \beta.
$$

\section{Preliminary results}

Let us recall the following folklore results about graphic degree sequences,
cf., e.g.~\cite{Be}.

\begin{lemma}
\label{lemma-tree}
There exists a tree with degree sequence $d_1\ge d_2\ge\cdots \ge d_n$ if and only if\/ $d_n\ge 1$
and\/ $\sum_{i=1}^n d_i = 2n-2$.
\end{lemma}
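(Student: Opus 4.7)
The plan is to prove both directions separately. For necessity, I would simply note that any tree $T$ on $n \ge 2$ vertices has exactly $n-1$ edges (a standard fact proved by induction or by counting using acyclicity and connectedness), hence $\sum_i d_i = 2(n-1) = 2n-2$. Moreover a tree on $\ge 2$ vertices is connected and has no isolated vertices, so the minimum degree is at least $1$.

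For sufficiency I would use induction on $n$. The base $n=2$ gives $d_1=d_2=1$, realized by a single edge. For the inductive step, observe first that not all $d_i$ can be $\ge 2$, since that would force $\sum d_i \ge 2n > 2n-2$; combined with $d_n \ge 1$ and the ordering, this yields $d_n = 1$. Similarly, since $\sum d_i = 2n - 2 > n$ for $n \ge 3$, we cannot have all $d_i = 1$, so $d_1 \ge 2$. Now form the new sequence by deleting $d_n$ and decrementing $d_1$: its terms are $d_1 - 1, d_2, \ldots, d_{n-1}$, all positive (using $d_1 \ge 2$ and $d_i \ge d_n = 1$), with sum $2n - 4 = 2(n-1)-2$. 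After reordering to non-increasing form, the induction hypothesis yields a tree $T'$ on $n-1$ vertices realizing it. Attach a new leaf to the vertex of $T'$ that corresponds to the entry $d_1 - 1$; its degree rises to $d_1$, producing a tree with the required sequence.

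I do not expect any real obstacle here: the argument is entirely routine once one identifies the correct ``leaf-pruning'' reduction. The only minor subtlety is verifying that after decrementing $d_1$ the resulting multiset still consists of positive integers and still has the correct parity/sum, which is immediate from the observations above. An equally valid alternative would be to invoke the Prüfer correspondence: given $d_1 \ge \cdots \ge d_n \ge 1$ with $\sum d_i = 2n-2$, form any sequence of length $n-2$ in which label $i$ appears exactly $d_i - 1$ times (the total length is $\sum(d_i - 1) = n-2$, as required), and decode it into a tree in which vertex $i$ has degree $d_i$. Either route gives the lemma with essentially no additional machinery.
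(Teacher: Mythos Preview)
Your argument is correct: the leaf-pruning induction is the standard proof, and the Pr\"ufer alternative you sketch works equally well. Note, however, that the paper does not actually prove this lemma; it is stated as a folklore result with a reference to Berge~\cite{Be}, so there is no ``paper's own proof'' to compare against---your write-up simply supplies what the authors chose to omit.
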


\begin{lemma}
\label{lemma-bip}
Let $a_1\ge a_2\ge\cdots\ge a_n$ and $b_1\ge b_2\ge\cdots\ge b_m$ be two sequences of positive integers
such that $a_1\le m$, $\sum_{i=1}^n a_i=\sum_{i=1}^m b_i$ and $b_1\le b_m+1$.  Then there exists a bipartite
graph $G$ with parts $u_1,u_2,\ldots, u_n$ and $v_1,v_2,\ldots, v_m$ such that
the degree of $u_i$ is $a_i$ for $1\le i\le n$ and the degree of $v_i$ is $b_i$ for $1\le i\le m$.
\end{lemma}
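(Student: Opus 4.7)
The plan is to prove the lemma by induction on $n$. The key observation is that the condition $b_1 \le b_m + 1$ forces the sequence $(b_j)$ to take at most two consecutive values: some $k := b_m$ and possibly $k+1$. This near-uniformity is exactly what is preserved by a greedy step, which makes induction work cleanly.

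The base case $n=1$ is immediate: the conditions $a_1 \le m$ and $a_1 = \sum_j b_j \ge m$ (since each $b_j \ge 1$) force $a_1 = m$ and $b_j = 1$ for all $j$, so connecting $u_1$ to every $v_j$ works. For the inductive step, I would construct the edges incident to $u_1$ by the natural greedy choice: connect $u_1$ to the $a_1$ vertices carrying the \emph{largest} current demand, namely $v_1, v_2, \ldots, v_{a_1}$ (this is permitted since $a_1 \le m$). Let $b'_j := b_j - 1$ for $j \le a_1$ and $b'_j := b_j$ otherwise, discard any $v_j$ with $b'_j = 0$, and re-sort the remaining $b'$-values in non-increasing order to obtain a shorter sequence $b''_1 \ge \cdots \ge b''_{m'}$.

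What remains is to verify that the reduced instance $(a_2,\ldots,a_n)$ versus $(b''_1,\ldots,b''_{m'})$ satisfies the hypotheses of the lemma. The sum identity $\sum_{i\ge 2} a_i = \sum_{j} b''_j$ is automatic. The spread condition $b''_1 \le b''_{m'} + 1$ is the delicate point and is handled by a short case analysis on the possible configurations of $(b_j)$: writing $s$ for the number of indices with $b_j = k+1$, one checks that (i) if $a_1 \le s$, the operation converts $a_1$ entries from $k+1$ to $k$, leaving values in $\{k,k+1\}$; and (ii) if $a_1 > s$, it converts all $k+1$'s to $k$ and then $a_1 - s$ of the $k$'s to $k-1$, leaving values in $\{k-1,k\}$. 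In every case the sorted sequence has spread at most $1$. Finally, the condition $a_2 \le m'$ needs to be checked: if no zeros were dropped then $m' = m \ge a_1 \ge a_2$, while if a zero appeared then necessarily $k = 1$, so every surviving $b''_j$ equals $1$, hence $m' = \sum_j b''_j = \sum_{i\ge 2} a_i \ge a_2$.

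The main obstacle is exactly this verification that the greedy step preserves the hypotheses, in particular the spread bound $b''_1 \le b''_{m'} + 1$; once the case analysis above is in hand, the induction closes immediately and the required bipartite graph is obtained by adding $u_1$ back with its chosen neighborhood to the graph produced by the inductive hypothesis.
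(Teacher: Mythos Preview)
Your argument is correct. The greedy reduction you describe---attach $u_1$ to the $a_1$ vertices of largest residual demand, then recurse---is the standard way to prove this kind of result, and your case analysis verifying that the spread condition $b''_1 \le b''_{m'}+1$ and the bound $a_2 \le m'$ survive the reduction is accurate.

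As for comparison: the paper does not actually prove Lemma~\ref{lemma-bip}; it is quoted as a folklore fact with a reference to Berge. What the paper \emph{does} prove is the stronger Lemma~\ref{lemma-match}, which additionally guarantees a matching saturating the $A$-side when $n\le m$. That proof uses a different inductive scheme: it peels off \emph{both} $u_1$ and $v_1$ simultaneously (joining $u_1$ to $v_1,\ldots,v_{a_1}$ and $v_1$ to $u_1,\ldots,u_{b_1}$), because removing only $u_1$ as you do would not in general preserve the matching requirement. The price is a longer case analysis handling the possibilities $n'<n-1$ or $m'<m-1$. Your approach is cleaner precisely because you are proving the weaker statement and can afford the one-sided reduction; it would not extend directly to Lemma~\ref{lemma-match}.
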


Let us prove a slightly stronger statement:

\begin{lemma}\label{lemma-match}
Let $a_1\ge a_2\ge\cdots\ge a_n$ and $b_1\ge b_2\ge\cdots\ge b_m$ be two sequences of positive integers
such that $n\le m$, $a_1\le m$, $\sum_{i=1}^n a_i=\sum_{i=1}^m b_i$, and $b_1\le b_m+1$.  Let $A=\{u_1,u_2,\ldots, u_n\}$ and
$B=\{v_1,v_2,\ldots, v_m\}$ be two sets of vertices.  
Then there exists a bipartite graph $G$ with parts $A$ and $B$ such
that $G$ has a matching covering $A$, the degree of $u_i$ is $a_i$ for $1\le i\le n$ and the degree of $v_i$ is $b_i$ for $1\le i\le m$.
\end{lemma}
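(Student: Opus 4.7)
The plan is to start from any bipartite realization provided by Lemma~\ref{lemma-bip} and then upgrade it, by an edge-swap argument, to a realization that also has a matching covering $A$. Among all realizations of the two degree sequences on the labeled vertex sets $A=\{u_1,\ldots,u_n\}$ and $B=\{v_1,\ldots,v_m\}$, I would choose one, $G$, for which the maximum matching $M$ is as large as possible, and then argue that $|M|=n$.

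Assume for contradiction $|M|<n$. Pick an unmatched vertex $u\in A$ and form the usual alternating tree rooted at $u$; let $R\subseteq A$ and $Q\subseteq B$ be the $A$- and $B$-vertices it reaches. Because $M$ is maximum, every vertex of $Q$ is matched, $M$ restricts to a bijection $Q\to R\setminus\{u\}$, and $N_G(R)=Q$. Since $|M|<n\le m$, there exists an unmatched vertex $v'\in B$, and $v'\notin Q$ forces $N_G(v')\subseteq A\setminus R$. Fix a neighbor $v\in N_G(u)\subseteq Q$ of $u$, let $u'=M^{-1}(v)\in R$, and choose some $u^*\in N_G(v')$; if $u^*$ were unmatched then $M\cup\{u^*v'\}$ would already be a larger matching in $G$, so $u^*$ is matched. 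The intended move is the 2-swap $G':=G-u'v-u^*v'+u'v'+u^*v$: the labeled degree sequence is preserved, and $(M\setminus\{u'v\})\cup\{uv,u'v'\}$ is a matching in $G'$ of size $|M|+1$, contradicting the choice of $G$.

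The main obstacle is to check that $G'$ is actually simple. The absence of the edge $u'v'$ in $G$ is immediate from $u'\in R$ and $v'\notin N_G(R)$, but $u^*v\notin E(G)$ is not automatic, since a non-matching neighbor of $v\in Q$ need not be added to the alternating tree. The key structural observation I would exploit is that if the swap fails for every choice of $u^*\in N_G(v')$ and every $v\in N_G(u)$, then $N_G(v')\cup\{M(v)\}\subseteq N_G(v)$ for each such $v$, forcing $b(v)\ge b(v')+1$. The hypothesis $b_1\le b_m+1$ then pins $b(v)=b_m+1$, $b(v')=b_m$, and $N_G(v)=N_G(v')\cup\{M(v)\}$, which is rigid enough that a further 2-swap, possibly combined with a relabeling of two same-degree vertices of $B$, yields a new realization with a strictly larger matching. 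This last step, which is where the near-regularity hypothesis $b_1\le b_m+1$ is genuinely used (and which is necessary, as one sees from the sequences $a=b=(3,1,1)$ with $n=m=3$), I expect to be the main technical difficulty of the proof.
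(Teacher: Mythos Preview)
Your extremal edge-swap approach is sound and is genuinely different from the paper's proof, which is an explicit induction on $n$ that peels off $u_1$ and $v_1$ and works through several cases to rebuild both the realization and the matching. Your argument is shorter and more conceptual; the paper's is more constructive.

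However, the step you flag as the ``main technical difficulty'' is not a difficulty at all---you have overlooked one vertex. With $v\in N_G(u)$ and $u'=M(v)$, you recorded $N_G(v')\cup\{u'\}\subseteq N_G(v)$, but you also have $u\in N_G(v)$ (that is how $v$ was chosen) while $u\notin N_G(v')$ (since $u\in R$ and $N_G(v')\subseteq A\setminus R$), and $u\ne u'$ because $u$ is unmatched. Hence in fact
\[
N_G(v')\cup\{u,u'\}\subseteq N_G(v),\qquad u,u'\notin N_G(v'),\qquad u\ne u',
\]
so $\deg_G(v)\ge \deg_G(v')+2$, which already contradicts $b_1\le b_m+1$. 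Thus for any single choice of $v\in N_G(u)$, some $u^*\in N_G(v')$ satisfies $u^*v\notin E(G)$, the swap $G'=G-u'v-u^*v'+u'v'+u^*v$ is simple, and $(M\setminus\{u'v\})\cup\{uv,u'v'\}$ is a matching of size $|M|+1$ in $G'$. No further swap or relabeling is needed, and there is no residual case analysis.

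Two minor checks you should state explicitly: the edge $u^*v'$ is not in $M$ (since $v'$ is unmatched), so $M\setminus\{u'v\}\subseteq E(G')$; and $u^*\notin\{u,u'\}$ because $u^*\in A\setminus R$ while $u,u'\in R$, so the swap removes and adds four genuinely distinct edges.
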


\begin{proof}
We prove the claim by induction.   The lemma holds if $n=1$, as in that case
$m\ge a_1=\sum_{i=1}^m b_i\ge m$, thus $a_1=m$ and $b_1=b_2=\cdots=b_m=1$, and
we take $G=K_{1,m}$.  Suppose now that $n\ge 2$ and that the claim is true for all
sequences $a'_1, \ldots, a'_{n'}$ and $b'_1, \ldots, b'_{m'}$ such that $n'<n$.
If $b_1=1$, then $G$ is a union of stars, thus assume that $b_1\ge 2$.
This implies that $a_1\ge 2$, as if $a_1=1$, then $m\ge n=\sum_{i=1}^n a_i=\sum_{i=1}^m b_i\ge m$
and $b_1$ would be equal to $1$.  On the other hand,
$mn\ge \sum_{i=1}^n a_i=\sum_{i=1}^m b_i\ge m(b_1-1)+1$, thus $b_1\le n$.

Consider the sequences $a_2-1, a_3-1, \ldots, a_{b_1}-1, a_{b_1+1}, \ldots,
a_n$ and $b_2-1, b_3-1, \ldots, b_{a_1}-1, b_{a_1+1}, \ldots, b_n$.  Let
$a'_1\ge a'_2\ge \cdots\ge a'_{n'}$ and $b'_1\ge b'_2\ge \cdots\ge b'_{m'}$ be
the positive elements of these two sequences.
If the first sequence is empty, then $a_2=a_3=\cdots=a_n=1$ and $n=b_1$, and 
hence $m+b_1-1\ge a_1+b_1-1=\sum_{i=1}^n a_i=\sum_{i=1}^m b_i\ge b_1+m-1$. 
Therefore equalities hold, and this is only possible when $a_1=m$ and
$b_2=b_3=\cdots=b_m=1$.  It follows that $n=b_1=2$ and $a_2=1$.  
In this case $G$ is $K_{1,m}$ with one
edge subdivided, which has a matching covering $A$ as $m\ge n=2$.

Therefore, we may assume that $n'\ge 1$.  Note that $\sum_{i=1}^{n'} a'_i=\sum_{i=1}^{m'} b'_i$,
which implies that $m'\ge 1$.  Also, observe that $b'_1\le b'_{m'}+1$.
Suppose first that $m'<m-1$, i.e., $b_{a_1}=1$, and thus $b_1=2$.
If $a_2>1$, then $n'=n-1$, and let $G$ be the graph obtained from the
union of stars $K_{1,a'_1}$, $K_{1,a'_2}$, \ldots, $K_{1,a'_{n'}}$ by adding the vertices
of $u_1$ and $v_1$ of degrees $a_1$ and $b_1$, respectively (with $u_1v_1$ in the matching),
joined to the appropriate vertices of the stars.  If $a_2=1$, then $G$ is a union of a matching
and the star $K_{1,a_1}$ with some (but not all) edges subdivided.
Therefore, we may assume from now on that $m'=m-1$.

Suppose now that $n'<n-1$, i.e., $a_{b_1}=1$.  In that case,
\begin{eqnarray*}
  (b_1-2)m-b_1+n+a_{b_1-1}+1 
    &\ge& (b_1-2)a_1+a_{b_1-1}+n-b_1+1 \\
    &\ge& \sum_{i=1}^n a_i=\sum_{i=1}^m b_i\ge (b_1-1)m+1.
\end{eqnarray*}
Thus $a_{b_1-1}\ge m-n+b_1\ge 2$, hence $n'=n-2$.  The sequences $a_1-1$, $a'_1$, $a'_2$, \ldots, $a'_{n'}$
and $b_2$, $b_3$, \ldots, $b_m$ satisfy the assumptions of the lemma; let $G'$ be the graph corresponding
to them.  We let $G$ be the graph obtained from $G'$ by adding the vertices $u_{b_1}$ and $v_1$
and joining $v_1$ with $u_1$, $u_2$, \ldots, $u_{b_1}$.  The edge $u_{b_1}v_1$ is added to the matching covering $A$.

Finally, consider the case when $n'=n-1$ and $m'=m-1$.
Note that $n'\le m'$.  If $b_1=n$, then $a'_1=a_2-1\le a_1-1\le m'$.
On the other hand, if $b_1<n$, then $(b_1+1)a_{b_1+1}\le\sum_{i=1}^n a_i=\sum_{i=1}^m b_i\le mb_1$, and thus
$a_{b_1+1}\le m-1$ and again, $a'_1\le m'$.  Therefore, the sequences $a'_1$, \ldots, $a'_{n'}$ and
$b'_1$, \ldots, $b'_{m'}$ satisfy the assumptions of the lemma; let $G'$ be the graph corresponding
to them.  We let $G$ be the graph obtained from $G'$ by adding the vertices $u_1$ and $v_1$
and joining $u_1$ with $v_1$, \ldots, $v_{a_1}$ and $v_1$ with $u_2$, \ldots, $u_{b_1}$.
The edge $u_1v_1$ is added to the matching covering $A$.
\end{proof}

We shall also need the following simple observation:

\begin{lemma}\label{lemma-12reg}
There exists a graph $G$ with $n\ge 1$ vertices, $e$ edges and\/
$1\le\delta(G)\le \Delta(G)\le 2$ if and only if
$e=n\ge 3$ or $e+1\le n\le 2e$ and $n\ge 2$.
\end{lemma}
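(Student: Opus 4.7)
The plan is to exploit the obvious structure of graphs with minimum degree at least $1$ and maximum degree at most $2$: every component is either a path on at least two vertices or a cycle on at least three vertices. The lemma then reduces to an elementary count combined with an explicit construction.

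For necessity, I let $p$ and $q$ denote the numbers of path and cycle components of a graph $G$ satisfying the hypotheses. Summing edges component by component gives $e = n - p$, while each path contributes at least two vertices and each cycle at least three, so $n \ge 2p + 3q$, which rearranges to $n \le 2e - 3q$. If $p = 0$, then $q \ge 1$ (since $n \ge 1$) and $e = n$, with the cycle-size bound forcing $n \ge 3$. If $p \ge 1$, then $e = n - p \le n - 1$, giving $n \ge e+1$, while $n \le 2e - 3q \le 2e$ finishes the inequality chain $e + 1 \le n \le 2e$.

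For sufficiency, I would supply direct constructions. In the case $e = n \ge 3$ take $G = C_n$, a single cycle. In the case $e + 1 \le n \le 2e$ with $n \ge 2$, set $p = n - e \ge 1$ and form the disjoint union of $p - 1$ copies of $K_2$ with a single path on $2e - n + 2$ vertices; the hypothesis $n \le 2e$ ensures this last path has at least two vertices, and the vertex and edge totals $(p-1)\cdot 2 + (2e-n+2) = n$ and $(p-1) + (2e - n + 1) = e$ verify at once.

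There is really no obstacle of substance here; the only subtlety is the boundary case $e = n$, where the construction is forced to consist entirely of cycles, which is why the small values $n \in \{1,2\}$ must be excluded and the disjunction is stated as it is.
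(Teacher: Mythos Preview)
Your proof is correct and follows essentially the same approach as the paper: both arguments rest on the observation that a graph with $1\le\delta\le\Delta\le2$ is a disjoint union of paths and cycles, and both realize the sufficiency case $e+1\le n\le 2e$ via a matching together with a single long path. Your write-up is simply more explicit than the paper's terse ``without loss of generality'' reduction.
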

\begin{proof}
Without loss of generality, if such a graph exists, then it is either a cycle, or a union of a (possibly empty) matching and a path
of length at least one.  The former is possible if and only if $e=n\ge 3$.  The latter is possible if and only if $e+1\le n\le 2e$ and $n\ge 2$.
\end{proof}

Rao~\cite{Rao2} proved the following:

\begin{theorem}\label{thm-rao}
A sequence $d_1\ge d_2\ge \cdots \ge d_n$ of nonnegative integers is 
a degree sequence of a graph $G$ with $\omega(G)\ge k$ if and only if\/
$\sum_{i=1}^n d_i$ is even, $d_k\ge k-1$, and
for\/ $0\le s\le k$ and\/ $0\le t\le n-k$,
\begin{eqnarray}
   \sum_{i=1}^s d_i &+& \sum_{i=k+1}^{k+t} d_i \,-\, 2{s+t\choose 2}\nonumber\\
   &\le& \sum_{i=s+1}^k \min(s+t, d_i+s-k+1) + \sum_{i=k+t+1}^n\min(s+t,d_i).
   \label{eq:1}
\end{eqnarray}
Furthermore, if these conditions are satisfied, then we can choose $G$ so that the vertices of
the clique of size $k$ have degrees $d_1,d_2,\ldots,d_k$.
\end{theorem}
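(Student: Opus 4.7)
The plan is to prove both directions of the characterization: necessity through a direct edge-count on a prefix of the degree sequence inside and outside a fixed $k$-clique, and sufficiency through an inductive construction that first fixes the $k$-clique, then fills in its external attachments via the bipartite realization lemmas, and finally finishes off the outside degrees by an Erdős--Gallai-style argument.

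For necessity, suppose $G\in\R(D)$ contains a clique $K=\{v_1,\ldots,v_k\}$ whose vertices have degrees $d_1,\ldots,d_k$ (one can always rename so that the clique picks up the $k$ largest degrees, since the $k$-clique degrees are at least $k-1$ and the sequence is non-increasing). Fix $s\in\{0,\ldots,k\}$ and $t\in\{0,\ldots,n-k\}$ and set $S=\{v_1,\ldots,v_s\}$ and $T=\{v_{k+1},\ldots,v_{k+t}\}$, so $|S\cup T|=s+t$. Handshake on $S\cup T$ gives
$$\sum_{v\in S\cup T} d_v \;=\; 2\,e(S\cup T)+\sum_{v\notin S\cup T}\deg_{S\cup T}(v)\;\le\;2\binom{s+t}{2}+\sum_{v\notin S\cup T}\deg_{S\cup T}(v).$$
A vertex $v_i\in K\setminus S$ is already adjacent to the $k-s-1$ clique vertices outside $S$, hence contributes at most $\min(s+t,\,d_i-(k-s-1))=\min(s+t,\,d_i+s-k+1)$, while for $v_i$ with $i>k+t$ the trivial bound $\min(s+t,d_i)$ applies. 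Substituting yields (\ref{eq:1}); the parity condition and $d_k\ge k-1$ are immediate.

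For sufficiency, I would proceed by induction on $n$, starting from the complete graph on $v_1,\ldots,v_k$ with the outside vertices isolated, so that the residual deficiency is $d_i-(k-1)\ge 0$ for $i\le k$ and $d_i$ for $i>k$. The task decomposes into (i) adding a bipartite graph between $\{v_1,\ldots,v_k\}$ and $\{v_{k+1},\ldots,v_n\}$ that absorbs the clique-side deficiencies, and (ii) completing the outside deficiencies by edges inside $\{v_{k+1},\ldots,v_n\}$. The bipartite half can be produced using Lemmas~\ref{lemma-bip} and~\ref{lemma-match}, once one has arranged that the outside targets form an almost-regular sequence; the inequalities~(\ref{eq:1}) at $s=0$ reduce to the usual Erdős--Gallai condition guaranteeing that the outside residual sequence is graphic, while those at $s\ge 1$ are exactly what prevents the high-degree clique vertices from overloading any prefix of the outside targets. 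If a tentative attempt produces a graph whose outside-degree sequence differs from the prescribed one, a 2-switch avoiding the edges of $K$ may be applied to reduce the discrepancy.

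The main obstacle is the sufficiency direction, and in particular the combinatorial bookkeeping showing that the family of inequalities (\ref{eq:1}), jointly for all admissible $(s,t)$, is both necessary and sufficient for the splitting described above to succeed. The cleanest route is a minimum-counterexample argument: among graphic sequences $D'$ that dominate $D$ entrywise and admit a $k$-clique realization $G'$, take one with $\sum_i |d_i-d'_i|$ minimal, and use a 2-switch avoiding the edges of $K$ to contradict minimality. Swaps that would touch $K$ must be replaced by augmenting-path-style exchanges that reroute one endpoint through an outside vertex while preserving the clique; verifying that such an exchange is always available under (\ref{eq:1}) is the technical heart of the proof, and it is precisely here that the inequalities for small positive $s$ come into play.
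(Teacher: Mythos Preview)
The paper does not prove Theorem~\ref{thm-rao}: it is quoted verbatim from Rao's unpublished manuscript~\cite{Rao2} and used thereafter as a black box (in Lemma~\ref{lemma-largecl} and indirectly in Lemma~\ref{lemma-decomp}). There is therefore no proof in the paper to compare your attempt against.

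A few remarks on the proposal itself. In the necessity direction you write that ``one can always rename so that the clique picks up the $k$ largest degrees, since the $k$-clique degrees are at least $k-1$ and the sequence is non-increasing''. This is not a renaming: in a given realization $G$ with $\omega(G)\ge k$ the clique vertices need not be the ones carrying the degrees $d_1,\ldots,d_k$, and the mere fact that their degrees are all at least $k-1$ does not force this. What you actually need is that \emph{some} realization has its $k$-clique on the top-$k$ degree vertices---but that is exactly the ``furthermore'' clause of the theorem, so invoking it here is circular. A clean fix is to run your edge-count with an arbitrary $k$-clique $K$, taking $S$ to be the $s$ largest-degree vertices of $K$ and $T$ the $t$ largest-degree vertices outside $K$, and then show by a separate monotonicity argument on the ordered degree multiset that the resulting inequalities imply~(\ref{eq:1}); that last step is not automatic and must be written out.

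For sufficiency you correctly isolate the crux: a minimum-discrepancy or $2$-switch argument that must preserve the fixed clique $K$, with the inequalities~(\ref{eq:1}) for $s\ge 1$ supplying the reroutings when a naive switch would hit an edge of $K$. As written, though, this is a plan rather than a proof. The ``augmenting-path-style exchanges'' are not specified, and showing that such an exchange is always available under~(\ref{eq:1}) is precisely the nontrivial content of Rao's result; neither Lemma~\ref{lemma-bip} nor Lemma~\ref{lemma-match} by itself handles the interaction between the bipartite part and the graph induced on $\{v_{k+1},\ldots,v_n\}$.
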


Let us recall the characterization of graphic degree sequences by 
Erd\H{o}s and Gallai \cite{EG} in the following form:

\begin{theorem}\label{thm-eg}
A sequence $d_1, d_2, \ldots, d_n$ of non-negative integers
is graphic if and only if\/ $\sum_{i=1}^n d_i$ is even and for every 
$I\subseteq \{1,\ldots, n\}$,
\begin{equation}
  \sum_{i\in I}d_i \le 
    |I|(|I|-1)+\sum_{i\in \{1,\ldots, n\}\setminus I} \min(d_i, |I|).
\label{eq:2}
\end{equation}
\end{theorem}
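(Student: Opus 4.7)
The plan is to handle the two implications separately: necessity by a direct double-counting argument, and sufficiency by reducing to the classical ``prefix form'' of the Erd\H{o}s--Gallai theorem via a short swap argument.

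For necessity, take any realization $G$, identify vertex $v_i$ with degree $d_i$, and fix $I\subseteq \{1,\ldots,n\}$ with $S=\{v_i : i\in I\}$. Splitting edges into those inside $S$ and those across the cut gives
\[
   \sum_{i\in I} d_i \;=\; 2|E(G[S])| + e(S,V\setminus S),
\]
where $|E(G[S])|\le\binom{|I|}{2}$ and each $v_j\notin S$ contributes at most $\min(d_j,|I|)$ to $e(S,V\setminus S)$. Substituting yields \eqref{eq:2}, and parity of $\sum d_i$ is the handshake lemma.

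For sufficiency, reorder so that $d_1\ge d_2\ge\cdots\ge d_n$. The classical form of the Erd\H{o}s--Gallai theorem states that such a sorted sequence is graphic iff $\sum d_i$ is even and the ``prefix'' inequalities
\[
   \sum_{i=1}^k d_i \;\le\; k(k-1) + \sum_{i=k+1}^n \min(d_i,k) \qquad (1\le k\le n)
\]
hold. These are exactly the specializations of \eqref{eq:2} to $I=\{1,\ldots,k\}$, so the hypothesis immediately supplies them and the classical theorem produces a realization. To confirm that the stronger-looking all-subsets form in \eqref{eq:2} is not actually stronger, I would use the following swap: if $i\in I$, $j\notin I$, and $i>j$, then $d_i\le d_j$ and $\min(d_i,|I|)\le\min(d_j,|I|)$, so replacing $i$ by $j$ in $I$ increases the LHS of \eqref{eq:2} by $d_j-d_i$ and decreases the RHS by $\min(d_j,|I|)-\min(d_i,|I|)$, making the inequality tighter. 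Iterating moves an arbitrary $I$ of size $k$ to the prefix $\{1,\ldots,k\}$, so the prefix inequalities imply \eqref{eq:2} for every $I$.

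The main obstacle is really the classical Erd\H{o}s--Gallai theorem itself, rather than the conversion between the two forms. If one wants to give a self-contained proof, the standard route is induction on $\sum d_i$ via a Havel--Hakimi step: delete the largest-degree vertex and decrement the next $d_1$ largest entries, verify that parity and all prefix inequalities are preserved under this operation (this is the routine calculation one would have to grind through), and invoke the inductive hypothesis to rebuild $G$. The translation between the two forms is the swap lemma sketched above.
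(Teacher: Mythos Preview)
The paper does not prove this theorem at all: it is stated as a classical result of Erd\H{o}s and Gallai with a citation to~\cite{EG}, followed only by the remark that for a sorted sequence it suffices to check \eqref{eq:2} on prefix sets $I=\{1,\ldots,t\}$. Your proposal, by contrast, supplies an actual argument: a clean double-count for necessity, and for sufficiency a reduction to the standard prefix form of Erd\H{o}s--Gallai, with the swap lemma establishing that the prefix inequalities are the tightest among all $I$ of a given size. That swap lemma is exactly what justifies the paper's one-line remark after the theorem, so you have made explicit something the paper only asserts.

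Your argument is correct as a sketch. The one place where it is not self-contained is, as you yourself flag, the invocation of the classical prefix-form Erd\H{o}s--Gallai theorem; the Havel--Hakimi induction you outline is the standard way to close that gap, and the verification that the prefix inequalities survive the reduction step is indeed routine (if tedious). Since the paper is content to cite the result, your level of detail already exceeds what the paper provides.
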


If $d_1\ge d_2\ge \cdots \ge d_n$, then property (\ref{eq:2}) can be checked
only for subsets of the form $I=\{1,2,\dots,t\}$, $1\le t\le n$.

Yin and Li~\cite[Theorem~1.8]{YL} showed the following:

\begin{theorem}\label{thm-yinli}
Suppose that a graphic sequence $d_1\ge\cdots\ge d_n$ satisfies 
the following conditions:
%$\sum_{i=1}^n d_i$ is even, 
$d_k\ge k-1$, 
%$d_1\le n-1$, 
$n\ge 2k$, and\/ $d_{2k}\ge k-2$.
Then it satisfies the assumptions of Theorem~\ref{thm-rao}.
\end{theorem}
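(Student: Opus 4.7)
I would verify inequality \eqref{eq:1} for every $0\le s\le k$ and $0\le t\le n-k$. For every $i\in\{s+1,\ldots,k\}$ we have $d_i\ge d_k\ge k-1$, hence $d_i+s-k+1\ge s$ and
\[
\min(s+t,\,d_i+s-k+1)=s+\min(t,\,d_i-k+1).
\]
Substituting, \eqref{eq:1} becomes the equivalent assertion
\[
\sum_{i=1}^s d_i+\sum_{i=k+1}^{k+t} d_i\le (s+t)(s+t-1)+s(k-s)+\sum_{i=s+1}^k\min(t,\,d_i-k+1)+\sum_{i=k+t+1}^n\min(s+t,\,d_i).
\]

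The natural first step is to apply the Erd\H{o}s--Gallai condition (Theorem~\ref{thm-eg}) to $I=\{1,\ldots,s\}\cup\{k+1,\ldots,k+t\}$, of size $s+t$. The resulting bound on $\sum_{i=1}^s d_i+\sum_{i=k+1}^{k+t} d_i$ matches the target on the quadratic and ``outside'' terms, but has $\sum_{i=s+1}^k\min(d_i,s+t)$ in place of $s(k-s)+\sum_{i=s+1}^k\min(t,d_i-k+1)$. A term-by-term comparison shows that the two agree when $d_i\ge k+t-1$, while otherwise the Erd\H{o}s--Gallai term exceeds the target term by $k+t-1-d_i$ if $d_i\in[s+t,\,k+t-2]$, and by $k-1-s$ if $d_i\in[k-1,\,s+t-1]$. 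Both discrepancies vanish once $s\ge k-1$, so the real difficulty occurs only for $s\le k-2$.

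To absorb these deficits I plan to apply Theorem~\ref{thm-eg} a second time to a larger set, a natural candidate being $I^{*}=\{1,\ldots,k+t\}$ of size $k+t$, and then combine the two Erd\H{o}s--Gallai bounds, for example by subtracting a suitable multiple of one from the other. The hypotheses $n\ge 2k$ and $d_{2k}\ge k-2$ enter here crucially: whenever $t<k$, every index in $\{k+t+1,\ldots,2k\}$ has $d_i\ge k-2$, and this gives the outside sum $\sum_{i=k+t+1}^n\min(s+t,d_i)$ extra slack beyond what Erd\H{o}s--Gallai alone guarantees -- slack that should match the middle-range deficits identified above. The hardest case should be the regime where $s+t$ is close to $k$ and many $d_i$ with $i\in\{s+1,\ldots,k\}$ fall in the intermediate range $[s+t,\,k+t-2]$, since there both the total deficit and the compensation are largest and the inequality closes only through a careful accounting that uses both additional hypotheses at their full strength.
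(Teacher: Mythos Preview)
The paper does not prove this theorem; it is quoted from Yin and Li~\cite{YL} without argument, so there is no ``paper's own proof'' to compare against. I can only assess your plan on its merits.

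Your first phase is fine: the rewriting $\min(s+t,d_i+s-k+1)=s+\min(t,d_i-k+1)$ is valid because $d_i\ge k-1$, the Erd\H{o}s--Gallai bound for $I=\{1,\dots,s\}\cup\{k+1,\dots,k+t\}$ is the right starting point, and your term-by-term accounting of the deficit is correct.

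The gap is in your second phase. Two related problems:
\begin{itemize}
\item \emph{``Subtracting a suitable multiple'' of one Erd\H{o}s--Gallai inequality from another is not a valid way to sharpen an upper bound.} From $L\le B_1$ and $L'\le B_2$ you cannot in general extract a useful bound on $L$ alone. If you carry out the computation with $I^{*}=\{1,\dots,k+t\}$ and actually combine, you will find that the comparison goes the wrong way: the difference of the quadratic terms is $(k-s)(k+s+2t-1)$, the lower bound you get on $\sum_{i=s+1}^k d_i$ is only $(k-s)(k-1)$, and the tail change $\min(s+t,d_i)-\min(k+t,d_i)$ is nonpositive, so the resulting inequality is weaker than, not stronger than, what you need.
\item \emph{There is no ``extra slack in the tail beyond what Erd\H{o}s--Gallai alone guarantees.''} The tail term $\sum_{i=k+t+1}^{n}\min(s+t,d_i)$ appears identically in both the Erd\H{o}s--Gallai bound for $I$ and in the target inequality. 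Knowing that these summands are individually large (via $d_{2k}\ge k-2$) cannot bridge the middle-term deficit, because that largeness is already fully accounted for on both sides.
\end{itemize}
So as it stands the hypotheses $n\ge 2k$ and $d_{2k}\ge k-2$ have not actually been used. You need a different mechanism for them---for instance, using $d_{k+1},\dots,d_{2k}\ge k-2$ to control the left-hand side $\sum_{i=k+1}^{k+t}d_i$ directly, or to bound how many indices $i\in\{s+1,\dots,k\}$ can have $d_i<k+t-1$---rather than hoping they produce slack in a term where none exists.
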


We will need the following variation:

\begin{lemma}
\label{lemma-largecl}
A graphic sequence $d_1\ge\cdots\ge d_{2k-1}$ of length\/ $2k-1$ with
$d_{2k-1}\ge k-1$ 
%and $d_1\le 2k-2$ 
satisfies the assumptions of Theorem~\ref{thm-rao} if and only if 
\begin{equation}
  \sum_{i=1}^{k-1}(d_i-d_k)+\sum_{i=k+1}^{2k-1}(d_k-d_i)\ge 2k-2-d_k.
  \label{eq:3}
\end{equation}
\end{lemma}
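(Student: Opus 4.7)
The plan is to verify Rao's condition (\ref{eq:1}) directly, exploiting that $d_{2k-1} \ge k-1$ simplifies all the minima on its right-hand side.

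For the ``only if'' direction, I specialize (\ref{eq:1}) to $(s,t)=(0,k-1)$. Since $k-1 \le d_i \le 2k-2$ for every $i$, one has $\min(k-1, d_i - k + 1) = d_i - k + 1$ for $i \in \{1,\ldots,k\}$ and $\min(k-1, d_i) = k - 1$ for $i \in \{k+1, \ldots, 2k-1\}$. After substitution and a short rearrangement, (\ref{eq:1}) becomes $\sum_{i=1}^{k-1} d_i - \sum_{i=k+1}^{2k-1} d_i \ge 2(k-1) - d_k$, which is exactly (\ref{eq:3}).

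For the ``if'' direction, assume (\ref{eq:3}) together with graphicness and $d_{2k-1} \ge k-1$, and verify (\ref{eq:1}) for every remaining pair $(s,t)$ with $0 \le s \le k$ and $0 \le t \le k-1$. Introducing the slack variables $e_i := d_i - (k-1) \in [0, k-1]$ rewrites each $R(s,t)$ as an inequality purely in the $e_i$'s. The pairs with $s+t \ge k$ are handled via the trivial bound $\sum_{i \in I} d_i \le (s+t)(2k-2)$, once Rao's minima are simplified using $d_i \ge k - 1$; the pairs with $s \ge 1$ and $s + t \le k - 1$ are then reduced to the $s = 0$ case at the same value of $s + t$, using the monotonicity $d_1 \ge \cdots \ge d_{2k-1}$ to transfer the extra mass $d_1, \ldots, d_s$ from the LHS onto the $\min(s+t, e_i + s)$ terms on the RHS.

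The main obstacle is the sub-case $s = 0$ with $0 \le t \le k - 2$. In the $e_i$-variables the required inequality reads $\sum_{i=k+1}^{k+t} e_i + t \le \sum_{i=1}^k \min(t, e_i)$, whereas (\ref{eq:3}) is only its $t = k - 1$ specialization, $\sum_{i=1}^k e_i \ge (k-1) + \sum_{i=k+1}^{2k-1} e_i$. My plan is to combine (\ref{eq:3}) with the Erd\H{o}s-Gallai inequality (\ref{eq:2}) applied to $I = \{1, \ldots, k\}$, which upper-bounds $\sum_{i=1}^k e_i$ in terms of the number of indices $i > k$ at which $e_i = 0$, in order to bound the truncation deficit $\sum_{i=1}^k \max(0, e_i - t)$ by the slack $(k-1-t) + \sum_{i=k+t+1}^{2k-1} e_i$. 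The index bookkeeping needed to match the deficit against this slack is what I expect to be the most delicate step.
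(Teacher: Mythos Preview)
Your ``only if'' direction is correct and identical to the paper's. The ``if'' direction, however, has a genuine gap in the $s=0$ sub-case, and the sketch for $s+t\ge k$ is also too optimistic.

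For $s=0$, your plan is to deduce the target
\[
t+\sum_{i=k+1}^{k+t} e_i \;\le\; \sum_{i=1}^k \min(t,e_i)
\]
from (\ref{eq:3}) by proving the stronger ``deficit $\le$ slack'' inequality
\[
\sum_{i=1}^k \max(0,e_i-t)\;\le\;(k-1-t)+\sum_{i=k+t+1}^{2k-1} e_i.
\]
This stronger inequality is simply false. Take $k=4$ and $d=(6,6,6,3,3,3,3)$, which is graphic (it is realized by $K_3+\overline{K_4}$), satisfies $d_7\ge k-1$, and satisfies (\ref{eq:3}) with room to spare. In the $e$-variables this is $e=(3,3,3,0,0,0,0)$. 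For $t=1$ the deficit equals $2+2+2+0=6$ while your slack is $(k-1-t)+e_6+e_7=2$. The Erd\H{o}s--Gallai bound on $I=\{1,\dots,k\}$ only gives $\sum_{i\le k} e_i\le (k-1)k-z$ with $z$ the number of zero $e_i$'s beyond $k$; here that reads $9\le 9$ and carries no information that could close the gap. So Erd\H{o}s--Gallai on that particular $I$ cannot rescue the argument, and the reduction you outline does not go through.

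The $s+t\ge k$ sketch has the same flaw: bounding the LHS by $(s+t)(2k-2)$ and the RHS from below via $d_i\ge k-1$ gives, e.g.\ at $(s,t)=(1,k-1)$, the requirement $k(k-1)\le (k-1)$, which fails for $k\ge 2$. (On the same example with $k=4$, $(s,t)=(1,3)$: your LHS bound is $12$, your RHS bound is $3$.)

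The paper's route is different and avoids both pitfalls. It first disposes of the case $d_k\ge k+t-1$ (for any $s$) by observing that then $\min(s+t,d_i+s-k+1)=s+t=\min(s+t,d_i)$ for $s<i\le k$, so (\ref{eq:1}) is literally the Erd\H{o}s--Gallai inequality for $I=\{1,\dots,s,k+1,\dots,k+t\}$. In the remaining range $d_k\le k+t-2$ it replaces the extreme endpoints by $d_k$ and reduces (\ref{eq:1}) to an explicit polynomial inequality in $s,t,k,d_k$ which it checks by hand; the $s=0$ case is then finished not by a deficit/slack comparison but by the termwise bound $e_{k+i}\le\min(e_{i+1},t)$ when $e_1\ge t$, and by the observation that when $e_1<t$ the RHS of (\ref{eq:1}) is independent of $t$ while the LHS is monotone, collapsing everything to $t=k-1$, i.e.\ to (\ref{eq:3}). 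You will need one of these two ideas (the $d_k$-threshold reduction to Erd\H{o}s--Gallai, or the monotonicity-in-$t$ trick) to close the $s=0$ case; the single Erd\H{o}s--Gallai inequality at $I=\{1,\dots,k\}$ is not enough.
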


\begin{proof}
Consider the condition (\ref{eq:1}) of Theorem~\ref{thm-rao} with $s=0$ and $t=k-1$:
\begin{equation}
  \sum_{i=k+1}^{2k-1} d_i - 2{k-1\choose 2}\le \sum_{i=1}^k \min(k-1, d_i-k+1).
  \label{eq:4}
\end{equation}
As $d_i\le 2k - 2$, we have $\min(k-1, d_i-k+1)=d_i-k+1$ for $1\le i\le k$. 
Subtracting $kd_k$ from both sides of (\ref{eq:4}), we get
$$
  \sum_{i=k+1}^{2k-1} (d_i-d_k) - d_k - (k-1)(k-2) \le
  \sum_{i=1}^{k-1} (d_i-d_k) - k(k-1),
$$
which implies (\ref{eq:3}).

The above derivation actually shows that (\ref{eq:3}) and (\ref{eq:4}) are
equivalent. Therefore, we need to prove that (\ref{eq:3}) (or~(\ref{eq:4}))
implies the condition (\ref{eq:1}) of Theorem~\ref{thm-rao} for the choices
of $s$ and $t$ such that $0\le s\le k$, $0\le t\le k-1$ and 
either $s\neq 0$ or $t\neq k-1$.

If $d_k\ge k+t-1$ and $s<k$, then $\min(s+t, d_i+s-k+1)=s+t=\min(s+t, d_i)$ for $s+1\le i\le k$. Since the sequence $d_1,\ldots,d_n$ is graphic, (\ref{eq:1})
follows from (\ref{eq:2}) with $I=\{1, 2, \ldots, s, k+1, k+2, \ldots, k+t\}$.  
The same argument works if $s=k$ (independently of the value of $d_k$),
since the first sum on the right hand side of (\ref{eq:1}) vanishes.
Therefore, we may henceforth assume that $d_k\le k+t-2$ and $s\le k-1$.

As $d_1\le 2k-2$, $d_{2k-1}\ge k-1$, $d_i\ge d_k$ for $i\le k$ and $d_i\le d_k$ for $i\ge k$, it suffices to show that
\begin{eqnarray}
   s(2k-2) + td_k - 2{s+t\choose 2}
     &\le& (k-s)\min(s+t, d_k+s-k+1) + \nonumber\\
     && (k-1-t)\min(s+t,k-1).
   \label{eq:5}
\end{eqnarray}
As $d_k+s-k+1\le s+t$, the inequality (\ref{eq:5}) is equivalent to
\begin{eqnarray}
   (k+s)(k-1)+(s+t-k)d_k &\le& (s+t)(s+t-1)+s(k-s)+ \nonumber\\
   && (k-t-1)\min(s+t,k-1).
   \label{eq:6}
\end{eqnarray}

Let us first assume that $s+t\ge k$.
Since $d_k\le k+t-2$, it suffices to prove that 
$$
  (k+s)(k-1)+(s+t-k)(k+t-2)\le (s+t)(s+t-1)+s(k-s)+(k-t-1)(k-1),
$$
which is equivalent to $(k-t-2)(k-s-2)+k-3\ge 0$. 
If $t=k-1$, then $(k-t-2)(k-s-2)+k-3=s-1\ge 0$ (as $s+t\ge k$).
Similarly, if $s=k-1$, then $(k-t-2)(k-s-2)+k-3=t-1\ge 0$.
If $s\le k-2$ and $t\le k-2$, then $k\le s+t\le 2k-4$, hence $k\ge 4$ and $(k-t-2)(k-s-2)+k-3\ge 1$,
thus the condition is satisfied.

Let us now consider the remaining case when $s+t\le k-1$. 
As $\min(s+t,k-1)=s+t$, $d_k\ge k-1$, and the coefficient of $d_k$ on the
left hand side of (\ref{eq:6}) is negative, it suffices to prove that
$$
   (k+s)(k-1)\le(k-s-t)(k-1)+(s+t)(s+t-1)+s(k-s)+(k-t-1)(s+t).
$$
This is equivalent to $t\le st$.  If $s>0$, then this condition is satisfied. 
So, it remains to consider the case when $s=0$.
The condition (\ref{eq:1}) of Theorem~\ref{thm-rao} then becomes 
$$\sum_{i=k+1}^{k+t}d_i\le t(k-2) + \sum_{i=1}^k\min(d_i-k+1, t),$$
which is equivalent to
\begin{equation}
  t+\sum_{i=1}^t(d_{k+i}-k+1)\le \sum_{i=1}^k\min(d_i-k+1, t).
   \label{eq:7}
\end{equation}
Note that $d_{k+i}-k+1\le \min(d_{i+1}-k+1, t)$ since $d_{k+i}-k+1\le d_k-k+1\le t-1$ and $d_{k+i}\le d_{i+1}$, for $1\le i\le t$.  Therefore, (\ref{eq:7}) holds
if $d_1\ge k+t-1$.  Suppose that $d_1\le k+t-2$, i.e., $t\ge d_1-k+2$. Then 
$\min(d_i-k+1, t)=d_i-k+1$ for $1\le i\le k$.  Therefore, for $t\ge d_1-k+2$, the right-hand
side of (\ref{eq:7}) is independent of $t$, and the left-hand side is non-decreasing in $t$, and hence the condition is satisfied
for all $t$ if and only if it is satisfied for $t=k-1$, which is precisely
our original assumption (\ref{eq:4}).
\end{proof}

\section{Chromatic number and cliques}
\label{sect:3}

For graphs $G_1$, $G_2$, \ldots, $G_k$, let $G_1+G_2+\ldots+G_k$ be their {\em join},
i.e., the graph obtained from the disjoint union of $G_1$, $G_2$, \ldots, $G_k$ by adding 
all edges between $V(G_i)$ and $V(G_j)$ for all $i,j$ such that $1\le i<j\le k$.
A graph $G$ is {\em hypo-matchable} if for each $v\in V(G)$, $G-v$ has a perfect matching. A graph $G$ is
\DEF{$\chi$-critical} if for each $v\in V(G)$, $\chi(G-v)<\chi(G)$.

A graph $G$ is {\em basic} if 
\begin{itemize}
\item $\chi(G)\le\omega(D(G))$, or 
\item $G$ is $\chi$-critical, the number $n$ of vertices of $G$ is odd, $n=2m+1$, $\chi(G)=m+1$, $\omega(D(G))=m$ and the complement
of $G$ is hypo-matchable.
\end{itemize}

We say that $G$ is {\em nontrivial\/} if $\chi(G)>\omega(D(G))$.
Note that Lemma~\ref{lemma-largecl} describes the degree sequences of nontrivial basic graphs,
that is, the degree sequences of nontrivial basic graphs do not satisfy the condition (\ref{eq:3}).
The following lemma shows that when considering the behavior of $\chi(G)$ and $\omega(D(G))$, then we only care about the basic graphs.

\begin{lemma}
\label{lemma-decomp}
Any graph $G$ has an induced subgraph $G'$ such that $\chi(G)=\chi(G')$ 
and $G'$ is a join of basic graphs.
\end{lemma}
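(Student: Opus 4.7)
The plan is to argue by induction on $|V(G)|$. The base case $|V(G)|=1$ is immediate. For the induction step, if $\chi(G)\le\omega(D(G))$ then $G$ is itself basic and we take $G'=G$; and if $G$ is not $\chi$-critical then some proper induced subgraph of $G$ has the same chromatic number, so the induction hypothesis applied to that subgraph provides the desired $G'$. We may therefore assume that $G$ is $\chi$-critical and that $\chi(G)>\omega(D(G))$.

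Suppose first that $\overline{G}$ is disconnected, and let $V_1,\ldots,V_k$ (with $k\ge 2$) be the vertex sets of its components. Then $G=G[V_1]+\cdots+G[V_k]$ and $\chi(G)=\sum_i\chi(G[V_i])$. A standard computation using the join structure shows that $\chi$-criticality of $G$ at a vertex $v\in V_i$ is equivalent to $\chi$-criticality of $G[V_i]$ at $v$, so each $G[V_i]$ is $\chi(G[V_i])$-critical. Since $\chi(G[V_i])<\chi(G)$, each $G[V_i]$ has strictly fewer vertices than $G$, and the induction hypothesis yields induced subgraphs $H_i\subseteq G[V_i]$, each a join of basic graphs, with $\chi(H_i)=\chi(G[V_i])$. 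Then $G'=H_1+\cdots+H_k$ is an induced subgraph of $G$, is itself a join of basic graphs, and satisfies $\chi(G')=\chi(G)$.

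The remaining case is that $G$ is $\chi$-critical, $\overline{G}$ is connected, and $\chi(G)>\omega(D(G))$; the goal is to show that $G$ itself is basic via the second clause of the definition. Set $c=\chi(G)$ and $n=|V(G)|$. Criticality forces $\delta(G)\ge c-1$, so every entry of $D(G)$ is at least $c-1$. If $n\ge 2c$, Theorem~\ref{thm-yinli} with $k=c$ (whose hypotheses $d_c\ge c-1$ and $d_{2c}\ge c-2$ are immediate) combined with Theorem~\ref{thm-rao} yields $\omega(D(G))\ge c$, contradicting our assumption; so $n\le 2c-1$. In the other direction, a classical theorem of Dirac asserts that every $c$-critical graph on at most $2c-2$ vertices is the join of two smaller critical graphs, equivalently has disconnected complement; since $\overline{G}$ is connected this forces $n\ge 2c-1$. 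Hence $n=2c-1$, and setting $m=c-1$ we obtain $n=2m+1$ and $\chi(G)=m+1$. Applying Theorem~\ref{thm-yinli} once more, now with $k=c-1$, yields $\omega(D(G))\ge c-1=m$, and combined with $\omega(D(G))<c$ this pins down $\omega(D(G))=m$.

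The final and technically hardest step is to verify that $\overline{G}$ is hypo-matchable. Fix $v\in V(G)$; by $\chi$-criticality $\chi(G-v)=c-1$, so any proper $(c-1)$-coloring of $G-v$ gives a partition of $V(\overline{G})\setminus\{v\}$ (of size $2c-2$) into $c-1$ cliques of $\overline{G}$. If the color classes could always be chosen of size exactly two we would extract a perfect matching of $\overline{G}-v$ immediately; the obstacle is that an unbalanced clique cover may occur. To overcome this I would analyze $\overline{G}-v$ via the Gallai--Edmonds decomposition: if $\overline{G}-v$ has no perfect matching, a Tutte--Berge obstruction yields a decomposition of $V(\overline{G})$ into factor-critical components, a matching set $A$, and a residual $C$. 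Using the connectedness of $\overline{G}$, the already-established identities $n=2c-1$ and $\omega(D(G))=c-1$, and the failure of inequality~(\ref{eq:3}) from Lemma~\ref{lemma-largecl} (which precisely captures when a degree sequence of length $2c-1$ fails to admit a $K_c$-realization), one derives a contradiction with either the connectedness of $\overline{G}$ or with $\omega(D(G))<c$. I expect this final structural analysis---converting the Tutte--Berge obstruction in $\overline{G}-v$ into a violation of the degree-sequence inequality~(\ref{eq:3})---to be the main technical effort of the proof.
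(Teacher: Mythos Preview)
Your reduction to the case where $G$ is $\chi$-critical with connected complement and $\chi(G)>\omega(D(G))$ is fine, and your derivation of $n=2c-1$ and $\omega(D(G))=m$ via Yin--Li and Gallai's theorem on critical graphs (the result you attribute to Dirac is due to Gallai) is correct and is a genuinely different route from the paper, which extracts $n=2\chi(G)-1$ only at the very end of a global Edmonds--Gallai analysis.

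The gap is your final step, the hypo-matchability of $\overline{G}$. Your proposed mechanism---turn a Tutte--Berge obstruction in $\overline{G}-v$ into a violation of inequality~(\ref{eq:3})---does not work as stated. The failure of (\ref{eq:3}) is exactly the statement $\omega(D(G))<m+1$, which you already know; it is a condition on the degree sequence, not on the particular graph $G$, and a Tutte set in $\overline{G}-v$ gives you structural information about $G$, not a new realization of $D(G)$ carrying a $K_{m+1}$. There is no evident bridge here between the matching obstruction and the degree-sequence inequality, and you have not supplied one.

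What the paper actually does for this part is quite different and more delicate. It fixes a $\chi(G)$-colouring minimizing the set $B$ of vertices lying in colour classes of size at least three, observes that the size-two classes form a maximum matching of $\overline{G-B}$, and applies the Edmonds--Gallai structure theorem to $\overline{G-B}$. A careful analysis of which factor-critical components can be avoided by some maximum matching (using Hall's theorem on an auxiliary bipartite graph between the Gallai set $T$ and the components) shows that the avoidable components are fully joined in $G$ to everything else; $\chi$-criticality then forces $B=T=\emptyset$ and a single factor-critical component, i.e.\ $\overline{G}$ itself is hypo-matchable. This argument simultaneously yields $n$ odd, $\chi(G)=(n+1)/2$, and hypo-matchability; it is the substance you are missing, and your sketch does not substitute for it.
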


\begin{proof}
For a contradiction, assume that $G$ is a smallest counterexample.  
Let $n$ be the number of vertices of $G$.
As $G$ is not basic, $\chi(G)\ge \omega(D(G))+1$.
As $G$ is a smallest counterexample, $G$ is $\chi$-critical, and thus $\delta(G)\ge \chi(G)-1\ge\omega(D(G))$.
By Theorem~\ref{thm-yinli}, this implies that $n\le 2\omega(D(G))+1$.  Also, $G$ is not a join of two graphs,
i.e., the complement of $G$ is connected.

Consider a coloring $\varphi$ of $G$ by $\chi(G)$ colors, such that the set of vertices $B$ that belong to color classes
of size at least three is as small as possible.  Let $k=|V(G-B)|$ and let $c$ be the number of color classes of $\varphi$
restricted to $G-B$.  As $2\chi(G)\ge 2\omega(D(G))+2>n$, $2c>k$.
Let $M$ be the set of color classes of $\varphi$ of size two.  Note that
$M$ is a maximum matching in the complementary graph $\overline{G-B}$ and that
$|M|=k-c$. Conversely, any matching in $\overline{G-B}$ of size $k-c$
corresponds to a coloring of $G$ by $\chi(G)$ colors.  Also, any 
vertex $v$ of $G-B$ that is not incident with $M$ is adjacent
to all vertices of $B$, as otherwise if $v$ is not adjacent to a vertex $u\in B$, then we can set the color of $u$ to $\varphi(v)$,
thus decreasing the size of $B$.

By the Edmonds-Gallai theorem on maximum matchings in graphs,
there exists $T\subseteq V(G-B)$ and a matching $M'$ in
$\overline{G-B}$ such that each component of $\overline{G-B}-T$ is hypo-matchable, 
each edge of $M'$ is incident with exactly one vertex of $T$, and no component of
$\overline{G-B}-T$ is incident with more than one edge of $M'$. 
Moreover, each vertex in $T$ is incident with an edge in $M'$.
Let $C$ be the set of components of $\overline{G-B}-T$.
Let $t=|T|$ and let $h=|C|-t$ be the number of components of $\overline{G-B}-T$ that are not incident with an edge of $M'$.
Note that $h=2c-k>0$.  Consider the bipartite graph $H$ with parts $T$ and $C$, such that a vertex $u\in T$
and a component $K\in C$ are adjacent in $H$ if and only if there exists a vertex $v\in K$ such that $u$ and $v$ are non-adjacent in $G$.
Let $C'\subseteq C$ be the set of components that are covered by every matching in $H$ of size $t$.  By Hall's theorem,
there exists a set $T'\subseteq T$ such that $|T'|=|C'|$ and the vertices of $T'$ are adjacent in $G$ to all vertices of the components
in $C\setminus C'$.  Consider now a component $K\in C\setminus C'$ and a vertex $v\in K$.  There exists a matching in $H$ of size $t$ that does not
cover $K$, and this matching can be extended to a matching $M_1$ in $\overline{G-B}-T$ of size $k-c$ that does not cover $v$, as $K$ is hypo-matchable.
It follows that $v$ is adjacent in $G$ to all vertices of $B$.  
Since the choice of $K$ and $v$ was arbitrary, it follows that
all vertices of the components of $C\setminus C'$ are adjacent to all vertices of $B$.  Furthermore, note that for any
component $K\in C\setminus C'$, $\chi(G[V(K)])=(|V(K)|+1)/2$.  Also, $|C\setminus C'|\ge h>0$.

Let $G_0$ be the subgraph of $G$ induced by $\bigcup_{K\in C'} V(K)\cup B\cup T'$, 
and let $G_1$, $G_2$, \ldots, $G_a$ be
the subgraphs of $G$ induced by the vertex sets of the elements of $C\setminus C'$.  Let $G'$ be the join
of $G_0$, $G_1$, \ldots, $G_a$.  Observe that $\chi(G')=\chi(G)$ and that $G'$ is an induced subgraph of $G$.
As $G$ is $\chi$-critical, $G'=G$, and hence $T=T'$.  
However, the complement of $G$ is connected and $C\setminus C'\neq\emptyset$, 
thus $G_0$ must be an empty graph, i.e., $B=T=\emptyset$, and $a=|C|=1$.
It follows that $G=G_1$ is a nontrivial basic graph.
\end{proof}

As we have observed in the introduction, the degree sequence $D=(5k-3)^{5k}$ 
has $\chi(D)=3k$ (realized by the join of $5$-cycles) and $\omega(D)=\lfloor\frac{5k}{2}\rfloor$. 
Thus, $\chi(D) = \frac{6}{5}\omega(D)+\frac{3}{5}$ if $k$ is odd
(and $\chi(D) = \frac{6}{5}\omega(D)$ if $k$ is even).
Our next result shows that this example is the worst possible when
comparing $\chi(D)$ and $\omega(D)$.

\begin{theorem}\label{thm-sf}
For every graph $G$, $\chi(G)\le\frac{6}{5}\omega(D(G))+\frac{3}{5}$.
\end{theorem}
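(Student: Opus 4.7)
My plan is to reduce to a join of basic graphs via Lemma~\ref{lemma-decomp}, then establish a pairing identity using Theorem~\ref{thm-yinli}. By Lemma~\ref{lemma-decomp} there exists an induced subgraph $G'$ of $G$ with $\chi(G')=\chi(G)$, where $G'=G_1+G_2+\cdots+G_k$ is a join of basic graphs. I would first note the monotonicity $\omega(D(G'))\le\omega(D(G))$: given any realization $H'$ of $D(G')$ on $V(G')$, the graph $H$ obtained by adding to $H'$ every edge of $G$ with at least one endpoint in $V(G)\setminus V(G')$ is a simple graph on $V(G)$ that realizes $D(G)$ and contains $H'$ as a subgraph. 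Thus it suffices to establish the bound for the join $G'$.

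Write $m_i=\omega(D(G_i))$, $W=\sum_{i=1}^k m_i$, and let $t$ be the number of nontrivial basic pieces among the $G_i$. A direct check (a hypo-matchable graph on $3$ vertices must be $K_3$, which would force $G_i$ to be the empty graph with $\chi(G_i)=1$) rules out $m_i\in\{0,1\}$ for nontrivial basic graphs, so $m_i\ge 2$ and $W\ge 2t$. Since $\chi(G_i)\le m_i$ for trivial basics and $\chi(G_i)=m_i+1$ for nontrivial ones, $\chi(G)=\chi(G')=\sum_i\chi(G_i)\le W+t$.

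The technical heart is the pairing claim: for any two nontrivial basic graphs $G_i,G_j$, the sequence $D(G_i+G_j)$ admits a realization containing $K_{k'}$, where $k'=m_i+m_j+1$. Its length is $n_i+n_j=2k'$, and $\chi$-criticality of $G_i$ and $G_j$ gives $\delta(G_\ell)\ge m_\ell$, so every vertex of $G_i+G_j$ has degree at least $\min(m_i+n_j,\,m_j+n_i)\ge k'-1$. Consequently $d_{k'}\ge k'-1$, $n=2k'$, and $d_{2k'}\ge k'-1>k'-2$, so Theorem~\ref{thm-yinli} applies and Theorem~\ref{thm-rao} produces the desired realization.

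To conclude, I would pair up the $t$ nontrivials two at a time and apply the pairing claim to each pair. Any unpaired nontrivial $G_\ell$ is realized as a graph with clique $m_\ell$ (possible since $\omega(D(G_\ell))=m_\ell$), each trivial $G_\ell$ is realized with a clique of size $\omega(D(G_\ell))$, and the join of all these realizations yields a graph $H$ with $D(H)=D(G')$ and $\omega(H)\ge W+\lfloor t/2\rfloor$. Therefore $\omega(D(G))\ge\omega(D(G'))\ge W+\lfloor t/2\rfloor$. A short case analysis on the parity of $t$, using $W\ge 2t$, confirms $W+t\le\tfrac{6}{5}(W+\lfloor t/2\rfloor)+\tfrac{3}{5}\le\tfrac{6}{5}\omega(D(G))+\tfrac{3}{5}$, completing the proof. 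The main obstacle is the pairing step, but once $k'$ and $n=2k'$ are set up, Theorem~\ref{thm-yinli} delivers it immediately.
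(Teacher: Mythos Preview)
Your proof is correct. Both your argument and the paper's rest on the same two ingredients, Lemma~\ref{lemma-decomp} and Theorem~\ref{thm-yinli}, but they combine them differently.

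The paper argues by a smallest counterexample: minimality forces $G$ itself to be a join of basic graphs, and a short splitting argument shows every piece must be nontrivial. With all $G_i$ nontrivial (hence $n_i\ge 5$), one gets $\chi(G)=\tfrac{n+k}{2}\le \tfrac{3n}{5}$, while $\delta(G)\ge \tfrac{n-1}{2}$ allows a single application of Theorem~\ref{thm-yinli} to the whole sequence $D(G)$, giving $\omega(D(G))\ge \tfrac{n-1}{2}$ and the bound follows immediately.

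Your route avoids minimality: you pass to the induced join $G'$, keep trivial and nontrivial pieces together, and apply Theorem~\ref{thm-yinli} only to \emph{pairs} $G_i+G_j$ of nontrivial pieces (where $n_i+n_j=2k'$ and every degree exceeds $k'-1$), gaining one extra clique vertex per pair. The bookkeeping $\chi\le W+t$, $\omega(D(G'))\ge W+\lfloor t/2\rfloor$, $W\ge 2t$ then does the job. Your argument is a bit more constructive and explicit (no induction on a counterexample, and it spells out the monotonicity $\omega(D(G'))\le\omega(D(G))$), at the cost of a small case analysis at the end; the paper's version is shorter because the minimality reduction lets it apply Yin--Li once globally.
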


\begin{proof}
Suppose for a contradiction that $G$ is a smallest counterexample, and let $n$ be 
the number of vertices of $G$. By Lemma~\ref{lemma-decomp}, we may assume that 
$G$ is a join of basic graphs $G_1$, $G_2$, \ldots, $G_k$. Also, 
$$
  \chi(G) = \chi(G_1)+\chi(G_2+\cdots+G_k) 
        \le \chi(G_1) + \tfrac{6}{5}\omega(D(G_2+\cdots+G_k))+\tfrac{3}{5}
$$
and 
$$
  \omega(D(G))\ge \omega(D(G_1))+\omega(D(G_2+\cdots+G_k)).
$$
Thus $\chi(G_1)>\tfrac{6}{5}\omega(D(G_1))$,
and by symmetry, $\chi(G_i)>\tfrac{6}{5}\omega(D(G_i))$ and hence $G_i$ is nontrivial for $1\le i\le k$. Let $n_i=2m_i+1$ be the number
of vertices of $G_i$.  As $G_i$ is basic, $\chi(G_i)=m_i+1$ and $\omega(D(G_i))=m_i$.
Note that the smallest nontrivial basic graph $C_5$ has $n_i=5$, thus
$\chi(G)=\frac{n+k}{2}\le \frac{3n}{5}$.
On the other hand, $\delta(G)=\min\{n-n_i+\delta(G_i) : 1\le i\le k\} \ge \min\{n-m_i-1 : 1\le i\le k\}\ge \frac{n-1}{2}$,
and hence by Theorem~\ref{thm-yinli}, $\omega(D(G))\ge \frac{n-1}{2}$, and
$\chi(G)\le \frac{3n}{5} = \frac{6(n-1)/2+3}{5} \le \tfrac{6}{5}\omega(D(G))+\tfrac{3}{5}$.
\end{proof}

The chromatic number of any graph $G$ satisfies the following
trivial bounds:
$$
   \omega(G) \le \chi(G) \le \Delta(G)+1.
$$
Reed investigated general bounds on the chromatic number that can be
expressed as a convex combination of $\Delta(G)$ and $\omega(G)$.
He proposed the following

\begin{conjecture}[Reed \cite{R}]
\label{conj:Reed}
$\chi(G) \le \big\lceil\tfrac{1}{2}(\omega(G)+\Delta(G)+1)\big\rceil$
for every graph $G$.
\end{conjecture}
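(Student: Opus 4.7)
The plan is to attempt Reed's Conjecture via the probabilistic ``color saving'' framework that Reed himself used to establish the weaker bound $\chi(G)\le\lceil\alpha\omega(G)+(1-\alpha)(\Delta(G)+1)\rceil$ for some small $\alpha>0$. I would work with a minimum counterexample $G$, which is necessarily $\chi$-critical, so $\delta(G)\ge\chi(G)-1$. The hypothesis $\chi(G)>\tfrac{1}{2}(\omega(G)+\Delta(G)+1)$ then forces $\delta(G)>\tfrac{1}{2}\Delta(G)$, placing us in a dense regime where structural rigidity can be exploited.

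First, I would classify each vertex $v$ of $G$ as \emph{dense} if $G[N(v)]$ contains at least $(1-\varepsilon)\binom{\Delta(G)}{2}$ edges, and \emph{sparse} otherwise, for a suitably chosen threshold $\varepsilon$. The neighborhood of a dense vertex is close to a clique, forcing a large palette locally, whereas a sparse vertex offers pairs of non-adjacent neighbors that can receive a common color. The target is a proper coloring of $G$ in which enough non-edges are ``twinned'' so that the palette has size at most $\lceil\tfrac{1}{2}(\omega(G)+\Delta(G)+1)\rceil$.

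Next, I would execute the argument in two stages. Stage one applies the Lov\'asz Local Lemma to a random choice of twin pairs selected from non-edges in sparse neighborhoods, producing a partial proper coloring whose total palette is smaller than $\Delta(G)+1$ by a margin proportional to the number of sparse vertices. Stage two treats the dense region: after identifying twins, it decomposes into near-cliques of clique number at most $\omega(G)$, which one completes using the remaining palette by a Hajnal--Szemer\'edi-style equitable extension. Minimality of $G$, combined with its $\chi$-criticality and the lower bound on $\delta(G)$ obtained above, should then deliver the contradiction.

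The main obstacle, and the reason Reed's Conjecture has remained open for over a decade, is that the margin achieved in stage one is proportional to $\varepsilon$ rather than to $\tfrac{1}{2}$. Pushing $\varepsilon$ near $\tfrac{1}{2}$ destroys the sparse dependency structure the Local Lemma requires: a single dense vertex may be adjacent to a constant fraction of all sparse vertices, so the bad events become intertwined and the necessary concentration collapses. Overcoming this would presumably demand a new iterative refinement or a discharging-style analysis—possibly in the spirit of the Haj\'os-type constructions used elsewhere in this paper—rather than a single-shot probabilistic argument, and I expect this gap between $\alpha\ll 1$ and $\alpha=\tfrac{1}{2}$ to be the genuine sticking point.
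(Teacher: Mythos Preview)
The statement you are attempting is labeled a \emph{conjecture} in the paper, and the paper does not prove it; Reed's Conjecture remains open. What the paper actually establishes is the degree-sequence analogue in Theorem~\ref{thm-reed}, namely $\chi(G)\le \tfrac{4}{5}\,\omega(D(G))+\tfrac{1}{5}\Delta(G)+1$, where $\omega(D(G))$ is the maximum clique number over all realizations of the degree sequence. That proof is entirely structural: it reduces via Lemma~\ref{lemma-decomp} to joins of basic graphs, then does explicit arithmetic on the sizes of the pieces. No probabilistic argument appears.

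Your proposal is not a proof but an outline of Reed's own Local Lemma framework, and you correctly identify in your final paragraph why it stalls short of $\alpha=\tfrac{1}{2}$: the dependency structure collapses as $\varepsilon$ grows. That is a genuine and well-known obstruction, and nothing earlier in your sketch circumvents it. In short, there is no gap to diagnose beyond the one you already name---the approach recovers only Reed's partial result $\chi(G)\le\lceil\alpha\,\omega(G)+(1-\alpha)(\Delta(G)+1)\rceil$ for some small $\alpha>0$, not the conjecture itself. Since the paper never claims to prove Conjecture~\ref{conj:Reed}, there is no paper proof to compare against.
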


For degree sequences, we prove the following stronger bound of the same form:

\begin{theorem}\label{thm-reed}
For every graph $G$, 
~$\chi(G) \le \frac{4}{5}\,\omega(D(G)) + \frac{1}{5}\Delta(G) + 1$.
\end{theorem}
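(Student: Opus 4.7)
The plan is to mimic the proof of Theorem~\ref{thm-sf}. I would assume $G$ is a smallest counterexample, use Lemma~\ref{lemma-decomp} to reduce to the situation in which $G = G_1 + G_2 + \cdots + G_k$ is a join of basic graphs, and then argue that every $G_i$ may be taken to be nontrivial basic.

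To peel off trivial summands, suppose some $G_i$, say $G_1$, is basic with $\chi(G_1) \le \omega(D(G_1))$. Then $G'' = G_2 + \cdots + G_k$ has strictly fewer vertices than $G$, so by minimality satisfies the theorem. Combining this with $\chi(G) = \chi(G_1) + \chi(G'')$, the trivial bound $\chi(G_1) \le \omega(D(G_1)) \le n_1$, the subadditivity $\omega(D(G)) \ge \omega(D(G_1)) + \omega(D(G''))$ (from joining optimal realizations of the two degree sequences), and $\Delta(G) \ge n_1 + \Delta(G'')$ (a maximum-degree vertex of $G''$ picks up $n_1$ new neighbors in the join), I would derive the bound for $G$, contradicting minimality. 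Hence every $G_i$ is nontrivial basic, so $n_i = 2m_i + 1$, $\chi(G_i) = m_i + 1$, $\omega(D(G_i)) = m_i$, $\delta(G_i) \ge m_i$, and $m_i \ge 2$ (since $C_5$ is the smallest nontrivial basic graph).

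When $k = 1$, $G$ is itself a nontrivial basic graph and therefore $\chi$-critical, giving $\Delta(G) \ge \delta(G) \ge m = \chi(G) - 1$, and the inequality $m + 1 \le \frac{4}{5} m + \frac{1}{5}\Delta(G) + 1$ is immediate. For $k \ge 2$, summing individual parameters gives $\chi(G) = \tfrac{n+k}{2}$, but the naive lower bound $\omega(D(G)) \ge \sum_i m_i = \tfrac{n-k}{2}$ misses the target by $k/2$ and the arithmetic collapses. The crux is to replace it by $\omega(D(G)) \ge \lfloor n/2 \rfloor$: writing $m^* = \max_i m_i$, every vertex of $G_i$ has $G$-degree at least $n - m_i - 1$, so $\delta(G) \ge n - m^* - 1$, and since $k \ge 2$ forces $n \ge 2m^* + 6$, the three hypotheses of Theorem~\ref{thm-yinli} are satisfied for clique order $\lfloor n/2 \rfloor$. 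Using also $\Delta(G) \ge n - m_\star - 1$ with $m_\star = \min_i m_i$ (from the summand of smallest order), the desired inequality reduces, after clearing denominators and splitting on the parity of $n$, to the elementary relation $(k-1)(m_\star - 2) \ge 0$, which holds because $k \ge 2$ and $m_\star \ge 2$.

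The main obstacle is precisely the strengthening $\omega(D(G)) \ge \lfloor n/2 \rfloor$: without it the naive bound is short by exactly $k/10$ in the final inequality. Verifying the three Yin--Li hypotheses from the minimum-degree estimate is the only delicate step; the remainder is algebra and a parity split. Equality is attained by the join of $k \ge 2$ copies of $C_5$, the extremal configuration dictated by the proof.
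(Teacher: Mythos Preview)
Your proposal is correct and follows essentially the same route as the paper: reduce via Lemma~\ref{lemma-decomp} to a join of basic graphs, argue that all summands must be nontrivial, use Theorem~\ref{thm-yinli} together with the minimum-degree bound $\delta(G)\ge n-m^*-1$ to get $\omega(D(G))\ge\lfloor n/2\rfloor$ (the paper writes $\tfrac{n-1}{2}$ and avoids the parity split), bound $\Delta(G)\ge n-m_\star-1$ via the smallest summand, and finish with the same elementary inequality $(k-1)(m_\star-2)\ge 0$. The only cosmetic differences are the order of cases (you peel trivial summands before treating $k=1$; the paper handles $k=1$ first, which cleanly covers the trivial-basic single-summand case that your peeling step does not literally address) and your slightly different bookkeeping in the peeling step, using $\omega(D(G_1))\le n_1$ and $\Delta(G)\ge n_1+\Delta(G'')$ in place of the paper's $\Delta(G_1)\ge\omega(D(G_1))-1$.
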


\begin{proof}
By considering a smallest counterexample, we see that $G$ is $\chi$-critical.
By Lemma \ref{lemma-decomp} we conclude that $G$ is a join of basic graphs $G_1,\ldots,G_k$.
Suppose first that $k=1$, i.e., $G$ is basic.  As $G$ is $\chi$-critical, $\Delta(G)\ge\delta(G)\ge\chi(G)-1$.
As $G$ is basic, $\omega(D(G))\ge \chi(G)-1$.  We conclude that
$\frac{4}{5}\,\omega(D(G)) + \frac{1}{5}\Delta(G) + 1\ge \chi(G)$.

Suppose now that $k>1$.  Let us first consider the case when one of the graphs $G_i$ in the join,
say $G_1$, is trivial, i.e., $\chi(G_1)\le \omega(D(G_1))$.  Observe that $\Delta(G_1)\ge \omega(D(G_1))-1$,
thus $\frac{4}{5}\,\omega(D(G_1)) + \frac{1}{5}\Delta(G_1)\ge \chi(G_1)-\frac{1}{5}$.
Note that $\Delta(G)\ge \Delta(G_1)+|V(G_2+\cdots+G_k)|\ge \Delta(G_1)+\Delta(G_2+\cdots+G_k)+1$.
It follows that $\chi(G)=\chi(G_1)+\chi(G_2+\cdots+G_k)\le
\left(\frac{4}{5}\,\omega(D(G_1)) + \frac{1}{5}\Delta(G_1)+\frac{1}{5}\right)+
\left(\frac{4}{5}\,\omega(D(G-G_1))+\frac{1}{5}\left(\Delta(G)-\Delta(G_1)-1\right)+1\right)
\le\frac{4}{5}\,\omega(D(G)) + \frac{1}{5}\Delta(G) + 1$.
This is in a contradiction with $G$ being a counterexample.  
It follows that each $G_i$ is a nontrivial basic graph.

Let $|V(G_i)| = 2m_i+1$, let $m=\min\{m_i\mid 1\le i\le k\}$,
and observe that $m\ge2$. We may assume that $m=m_1$.
Clearly, $\Delta=\Delta(G)\ge \Delta(G_1)+n-(2m+1) \ge n-m-1$, 
where $n=|V(G)|$. As in the proof of Theorem \ref{thm-sf} we conclude that
$\omega = \omega(D(G)) \ge \frac{n-1}{2}$.

>From the facts that $(m-2)(k-1)\ge 0$ and $n\ge (2m+1)k$, it follows that
$2m+5k-n+6\le 10$. Now,
\begin{eqnarray*}
  \chi(G) &=& \sum_{i=1}^k \chi(G_i) = \sum_{i=1}^k (m_i+1) 
            = \tfrac{1}{2}n + \tfrac{1}{2}k \\
    &=& \tfrac{4}{5}(\tfrac{n-1}{2}) + \tfrac{1}{5}(\tfrac{n-1}{2}) + \tfrac{k+1}{2}\\[1mm]
    &\le& \tfrac{4}{5}\omega + \tfrac{1}{5}(n-m-1) + \tfrac{1}{10}(2m+5k-n+6)\\[1mm]
    &\le& \tfrac{4}{5}\omega + \tfrac{1}{5}\Delta + 1.
\end{eqnarray*}
This is in a contradiction with $G$ being a counterexample, an the proof is complete.  
\end{proof}

Let us observe that the bound of Theorem \ref{thm-reed} is tight for 
the degree sequences $D=(5k-3)^{5k}$ ($k\ge 1$). The same examples also show that
the bound is best possible in the very strong sense as stated in the introduction. 

The proofs of Theorems \ref{thm-sf} and \ref{thm-reed} can be combined to
obtain the following stronger result. 

\begin{theorem}\label{thm-hajos2}
Let $D$ be a graphic degree sequence. There exists a realization
$G\in \R(D)$ with the following properties:
\begin{itemize}
\item[\rm (a)]
$\omega(G) \ge \tfrac{5}{6}\chi(D) - \tfrac{1}{2}$, and
\item[\rm (b)]
$\omega(G) \ge \tfrac{5}{4}\chi(D) - \tfrac{1}{4}\Delta(D) - \tfrac{5}{4}$.
\end{itemize}
\end{theorem}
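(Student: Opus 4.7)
The plan is to observe that Theorem~\ref{thm-hajos2} is essentially a reformulation of Theorems~\ref{thm-sf} and \ref{thm-reed} in terms of a realization maximizing the clique number. The key point is that $\omega(D)$, $\chi(D)$ and $\Delta(D)$ are all invariants of $D$, so any single realization $G\in \R(D)$ achieving $\omega(G)=\omega(D)$ automatically witnesses every lower bound on $\omega(D)$; this is what lets the same $G$ satisfy (a) and (b) simultaneously.

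First, I would pick a realization $G_0\in \R(D)$ with $\chi(G_0)=\chi(D)$; such a $G_0$ exists by the definition of $\chi(D)$. Since $D(G_0)=D$, we have $\omega(D(G_0))=\omega(D)$ and $\Delta(G_0)=\Delta(D)$. Applying Theorem~\ref{thm-sf} to $G_0$ yields
$$
  \chi(D)=\chi(G_0)\le \tfrac{6}{5}\,\omega(D)+\tfrac{3}{5},
$$
which, after rearrangement, gives $\omega(D)\ge \tfrac{5}{6}\chi(D)-\tfrac{1}{2}$. Analogously, Theorem~\ref{thm-reed} applied to $G_0$ gives
$$
  \chi(D)=\chi(G_0)\le \tfrac{4}{5}\,\omega(D)+\tfrac{1}{5}\Delta(D)+1,
$$
hence $\omega(D)\ge \tfrac{5}{4}\chi(D)-\tfrac{1}{4}\Delta(D)-\tfrac{5}{4}$.

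To finish, I would take any $G\in \R(D)$ with $\omega(G)=\omega(D)$; by the two displayed inequalities, this $G$ satisfies both (a) and (b). I do not anticipate a genuine obstacle, because the substantive work is already carried out inside the proofs of Theorems~\ref{thm-sf} and \ref{thm-reed} (the reduction to a join of basic graphs via Lemma~\ref{lemma-decomp}, followed by the counting arguments using Theorem~\ref{thm-yinli}). The only thing worth emphasizing is why a single realization suffices for both bounds: since $\omega(D)$ depends only on $D$, there is no need to reconcile possibly distinct realizations achieving (a) and (b) separately, and the realization maximizing $\omega$ serves both purposes.
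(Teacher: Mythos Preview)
Your proposal is correct and matches what the paper intends: the paper does not give a separate proof of Theorem~\ref{thm-hajos2} but simply remarks that the proofs of Theorems~\ref{thm-sf} and~\ref{thm-reed} combine to yield it. Your observation that rearranging those two inequalities bounds $\omega(D)$ from below, and that any realization $G$ with $\omega(G)=\omega(D)$ then witnesses both (a) and (b) simultaneously, is exactly the point; no further work is needed.
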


\section{The Haj\'os Conjecture for degree sequences}

The Haj\'os variation, Conjecture \ref{conj:Hajos}, is true in a quite strong sense. 
Let $h_1(G)=r$ if $r$ is the largest integer such that $G$ contains a subgraph 
obtained from $K_r$ by first selecting vertex-disjoint subgraphs $S_1,\dots,S_a$ of $K_r$,
where each $S_i$ is isomorphic to a star $K_{1,n_i}$ ($1\le i\le a$),
and then subdividing each edge of these stars exactly once.

\begin{theorem}\label{thm-hajos}
For every graph $G$, $\chi(G)\le h_1(D(G))$.
\end{theorem}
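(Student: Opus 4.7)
The plan is to proceed by induction on $|V(G)|$ and reduce to the case where $G$ is a $\chi$-critical join of basic graphs. Assume for contradiction that $G$ is a minimum counterexample. If $G$ is not $\chi$-critical, pick $v\in V(G)$ with $\chi(G-v)=\chi(G)$. By the inductive hypothesis applied to $G-v$, there exists $H\in \R(D(G-v))$ with $h_1(H)\ge\chi(G-v)=\chi(G)$. I would then extend $H$ to $H'\in \R(D(G))$ by adding a new vertex $v'$ joined to a set of $\deg_G(v)$ vertices of $H$ whose $H$-degrees coincide with $\deg_{G-v}(w)$ as $w$ ranges over the neighbors of $v$ in $G$. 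Such a choice is possible because $D(H)=D(G-v)$ as multisets, and a direct multiset computation shows that the resulting $H'$ has degree sequence exactly $D(G)$. Since $H\subseteq H'$, the substructure witnessing $h_1(H)\ge \chi(G)$ persists in $H'$, giving $h_1(D(G))\ge\chi(G)$, a contradiction. Hence $G$ is $\chi$-critical, and by Lemma~\ref{lemma-decomp}, $G=G_1+\cdots+G_k$ is a join of basic graphs.

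For each trivial basic component $G_i$ (i.e., $\chi(G_i)\le\omega(D(G_i))$), Theorem~\ref{thm-rao} yields a realization $G_i^*\in\R(D(G_i))$ containing a $K_{\chi(G_i)}$, whose vertices become branch vertices with no incident subdivided edges. For each nontrivial basic $G_i$ (of order $2m_i+1$, with $\chi(G_i)=m_i+1$, $\omega(D(G_i))=m_i$, and $\delta(D(G_i))\ge m_i$ by $\chi$-criticality), the core step is to build a realization $G_i^*\in\R(D(G_i))$ containing the following structure on all $2m_i+1$ vertices: a clique $A_i$ on $m_i$ vertices, an extra vertex $v_i^*$, a perfect matching $M_i$ between $A_i$ and $B_i:=V(G_i)\setminus(A_i\cup\{v_i^*\})$, and all edges between $v_i^*$ and $B_i$. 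This produces a subdivided $K_{m_i+1}$ on branch set $A_i\cup\{v_i^*\}$ whose subdivided edges arise from the single star $K_{1,m_i}$ centered at $v_i^*$. Taking the join $G^*:=G_1^*+\cdots+G_k^*$ gives a realization of $D(G)$ (each vertex's join increment is the same as in $G$), and branch vertices from distinct components are directly joined by join edges, so the local substructures amalgamate into a global subdivided $K_{\chi(G)}$ whose subdivided edges form a vertex-disjoint union of stars (one per nontrivial component), contradicting the choice of $G$.

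The hard part is the construction of $G_i^*$ for nontrivial basic $G_i$. Assigning the $m_i$ largest degrees of $D(G_i)$ to $A_i$, the $(m_i+1)$-st to $v_i^*$ and the rest to $B_i$, the mandatory edges contribute $m_i$, $m_i$, and $2$ to the degrees in $A_i$, $v_i^*$, and $B_i$ respectively; since $\delta(D(G_i))\ge m_i\ge 2$, the actual degrees dominate these. It then remains to realize the residual degree sequence using only the permitted edges (everything except $A_i$-internal pairs, the matched $A_i$--$B_i$ pairs, and $v_i^*$--$B_i$ pairs). I would handle this via an Erd\H{o}s--Gallai-style analysis (Theorem~\ref{thm-eg}) combined with Lemma~\ref{lemma-match} to control the bipartite portion between $A_i\cup\{v_i^*\}$ and $B_i$. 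The quantitative slack required to verify the residual inequalities is furnished by the precise way $D(G_i)$ fails condition~(\ref{eq:3}) of Lemma~\ref{lemma-largecl}, together with the hypo-matchability of $\overline{G_i}$; this residual verification is the most delicate step of the argument.
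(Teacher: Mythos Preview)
Your overall architecture matches the paper's: reduce to a join of basic graphs via Lemma~\ref{lemma-decomp}, realize each trivial summand by a graph containing a clique of the right size, realize each nontrivial summand $G_i$ (on $2m_i+1$ vertices) by a graph containing a subdivided $K_{m_i+1}$ whose subdivided edges form a single star, and then take the join. Your reduction to the $\chi$-critical case via the vertex-extension argument is correct and is in fact more explicit than the paper's one-line reduction.

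The gap is that you do not actually carry out the construction of $G_i^*$ for nontrivial basic $G_i$; you only describe the target configuration and say that ``the residual verification is the most delicate step.'' That residual step is essentially the whole proof. In the paper it is done by an explicit construction rather than by an Erd\H{o}s--Gallai verification: one sets $\alpha=\sum_{i\le m}(d_i-d_{m+1})$, $\beta=\sum_{i>m+1}(d_{m+1}-d_i)$, uses the failure of condition~(\ref{eq:3}) in Lemma~\ref{lemma-largecl} to bound $R=\tfrac12(2m-d_{m+1}+\alpha+\beta)$, and then splits into the cases $d_{m+1}\ge m+\alpha$ and $d_{m+1}\le m+\alpha-1$. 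In each case one starts from two cliques on $A=\{v_1,\ldots,v_m\}$ and $B=\{v_{m+1},\ldots,v_{2m+1}\}$, deletes from $B$ the edges of a carefully chosen tree (Lemma~\ref{lemma-tree}) or of a graph with maximum degree~$2$ (Lemma~\ref{lemma-12reg}), and then invokes Lemma~\ref{lemma-match} to add the bipartite edges between $A$ and $B$ \emph{together with} the matching covering $A$. Note in particular that the matching is not fixed in advance as a ``mandatory'' edge set the way you propose; it is produced by Lemma~\ref{lemma-match} simultaneously with the rest of the bipartite part, which is what makes the degree bookkeeping close.

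One specific misdirection: your appeal to the hypo-matchability of $\overline{G_i}$ cannot help here. You are building $G_i^*$ from the degree sequence $D(G_i)$ alone, so only degree-sequence information is usable; the paper uses nothing about $G_i$ beyond the failure of~(\ref{eq:3}) (equivalently, $\alpha+\beta<2m-d_{m+1}$). Dropping that reference and filling in the explicit two-case construction would complete your proof along the same lines as the paper's.
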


\begin{proof}
Since $h_1(D(G))\ge \omega(D(G))$ and $h_1(G_1+G_2)\ge h_1(G_1)+h_1(G_2)$, Lemma~\ref{lemma-decomp} shows that we can
assume that $G$ is a nontrivial basic graph.  Let $n=2m+1$ be the number of vertices of $G$, and let
$d_1\ge d_2\ge \cdots\ge d_{2m+1}$ be the degree sequence of $G$.  Let 
$$
  \alpha=\sum_{i=1}^m (d_i-d_{m+1}) \quad\hbox{and}\quad 
  \beta=\sum_{i=m+2}^{2m+1} (d_{m+1}-d_i).
$$
As $\omega(D(G))<m+1$ and $\delta(G)\ge \chi(G)-1=m$, Lemma~\ref{lemma-largecl} implies that $\alpha + \beta < 2m-d_{m+1}$.

For $1\le i\le m$, let $a_i = d_i-(m-1)$.
Let $R=\frac{1}{2}(2m-d_{m+1}+\alpha+\beta)$.  Note that $R$ is an integer and $0\le \beta < R\le 2m-d_{m+1}-1\le m-1$. Since $\beta<R$, it follows that
$d_{2m-R}=d_{m+1}$.  Let $t_i=d_{m+1}-d_{2m-R+i}+1$ for $1\le i\le R$ and $t_{R+1}=d_{m+1}-d_{2m+1}+R-\beta$.
An easy calculation shows that $\sum_{i=1}^{R+1} t_i=2R$. All the numbers $t_i$ are positive, thus by Lemma~\ref{lemma-tree}, there
exists a tree $T_1$ with vertices $v_{2m-R+1}, v_{2m-R+2}, \ldots, v_{2m+1}$ such that 
the degree of $v_{2m-R+i}$ in $T_1$ is $t_i$ for $i=1,\dots,R+1$.

Our goal is to form a graph $G'$ with the degree sequence $D(G)$ on vertices $v_1, v_2, \ldots, v_{2m+1}$ in the following way. We
start with a graph $H_1$ consisting of a union of two cliques, one on the vertex set $A=\{v_1,v_2,\ldots,v_m\}$ and the other one
on the vertices $B=\{v_{m+1}, v_{m+2}, \ldots, v_{2m+1}\}$. 
Next, we shall delete the edges of $T_1$ or a slight modification of $T_1$. Finally, we shall add edges between $A$ and $B$ so that the degrees will be 
as requested. The modification of $T_1$ will be designed in such a way as to enable us 
to get the required subdivision of the $(m+1)$-clique, whose vertices of degree $m$
will be $v_1,\dots,v_{m+1}$, and the only subdivided edges will be some of the edges incident with $v_{m+1}$. 

Let us first consider the case when $d_{m+1}\ge m + \alpha$.  Then $m+R-\beta\le 2m-R$.
Let $T_2$ be a forest obtained from $T_1$ by choosing $R-\beta-1$ neighbors of $v_{2m+1}$, removing the edges joining them to $v_{2m+1}$,
and adding a matching between them and the vertices $v_{m+2}$, $v_{m+3}$, \ldots, $v_{m+R-\beta}$. Let $H_2=H_1-E(T_2)$. In order to get a graph $G'$ whose degree
sequence is $D(G)$, we have to add edges between $A$ and $B$. This has to be
done in such a way that each vertex $v_i\in A$ is incident with $a_i$ added edges
($1\le i\le m$), and each vertex $v_{m+i}\in B$ ($1\le i\le m+1$) is incident with
$b_i$ edges, where $b_i$ is as follows.
For $i=1$ and for $R-\beta+1\le i\le m-R$, we have $b_i=d_{m+1}-m$, and for
$2\le i\le R-\beta$ and for $m-R+1\le i\le m+1$, we have $b_i=d_{m+1}-m+1$.
Note that 
$$
  \sum_{i=1}^m a_i = \sum_{i=1}^m(d_i-m+1) = \alpha + m(d_{m+1}-m+1)
$$
and 
\begin{eqnarray*}
  \sum_{i=1}^{m+1} b_i &=& (d_{m+1}-m)(m+1)+R-\beta-1+R+1\\
  &=& \alpha + m(d_{m+1}-m+1) = \sum_{i=1}^m a_i.
\end{eqnarray*}
Thus we can apply Lemma~\ref{lemma-match}.
(This is obvious if $d_{m+1}>m$; if $d_{m+1}=m$, then $\alpha=0$ by the assumption
that $d_{m+1}\ge m + \alpha$; moreover, $\beta=0$ since $d_i\ge m$ for every $i$; 
thus $a_i=1$ for $i=1,\dots,m$ and $b_i=1$ for $i=2,\dots,m+1$, so
Lemma~\ref{lemma-match} can be applied trivially
%and $R=m$, and $b_1$ is the only element of
%the sequences that is zero, thus the assumptions are satisfied 
for the sequence $b_2$, $b_3$, \ldots, $b_{m+1}$.) 
We conclude that there exists
a bipartite graph $H_3$ with parts $A=\{v_1,v_2,\ldots, v_m\}$ and $B=\{v_{m+1},v_{m+2},\ldots, v_{2m+1}\}$
such that the degree of $v_i$ is $a_i$ for $1\le i\le m$ and the degree of 
$v_{m+i}$ is $b_i$ for $1\le i\le m+1$, and $H_3$ has a matching $M$ covering $A$.

Let $G'=H_2\cup H_3$.
Observe that $D(G')=D(G)$ and $v_{m+1}$ is adjacent to all vertices 
in $B\setminus\{v_{m+1}\}$.
Together with the matching $M$, this gives a possibility to join $v_{m+1}$
with all vertices $v_1,\dots,v_m$ by using disjoint paths of length two.
This shows that $G'$ contains a subdivision of $K_{m+1}$, whose vertices of degree $m$ are $v_1,\dots,v_{m+1}$.
Each edge is subdivided at most once and all the subdivided edges are incident with $v_{m+1}$. Thus $h_1(G')\ge m+1$.

Suppose now that $d_{m+1}\le m+ \alpha - 1$. Note that $\alpha>0$ in this case.  Let $S$ be a set consisting of $v_{2m+1}$,
$R-\beta$ (arbitrarily chosen) neighbors of $v_{2m+1}$ in $T_1$ and of vertices $v_{m+2}$, $v_{m+3}$, \ldots, $v_{2m-R}$.
Note that $|S|=m-\beta$, $R-\beta < |S|$,
$2(R-\beta)=2m-d_{m+1}+\alpha-\beta \ge m-\beta+1 > |S|$, and $|S|\ge  m - (2m-d_{m+1}-\alpha-1) = d_{m+1}-m+\alpha+1\ge 2$.
Thus by Lemma~\ref{lemma-12reg} there exists a graph $T_3$ with vertex set $S$ and $R-\beta$ edges such that $1\le\delta(T_3)\le \Delta(T_3)\le 2$.
Let $S_i\subseteq S$ be the set of vertices of $T_3$ of degree $i$, for $i\in\{1,2\}$.  
Let $T_4$ be the graph obtained from $T_1$ by first removing the $R-\beta$ edges of $T_1$
induced by $S$ (i.e. those joining $v_{2m+1}$ and the $R-\beta$ chosen neighbors),
and then adding the $R-\beta$ edges of $T_3$. Let $H_4=H_1-E(T_4)$.
As in the previous case, we let $b'_i$ denote the number of edges that we need
to add between $A$ and the vertex $v_{m+1+i}$ ($i=1,\dots,m$) in order to form a graph whose degree sequences is $D(G)$. More precisely,  
let $b'_i=d_{m+1}-m+1$ for all $i$ such that $1\le i\le m$ and $v_{m+1+i}\not\in S_2$, and
$b'_i=d_{m+1}-m+2$ for all $i$ such that $v_{m+1+i}\in S_2$.  Let $a'_i=a_i-1$ for $1\le i\le d_{m+1}-m$
and $a'_i=a_i$ for $d_{m+1}-m+1\le i\le m$.  
If $a_i=1$ for some $i$, then we have $d_{m+1}=m$. This implies 
that $a'_i>0$ for $1\le i\le m$. 
By Lemma~\ref{lemma-match}, there exists a bipartite graph $H_5$ with parts
$A$ and $B\setminus \{v_{m+1}\}$ containing a perfect matching $M$, such that the degree of $v_i$ is $a'_i$
for $1\le i\le m$ and it is $b'_{i-m-1}$ for $m+2\le i\le 2m+1$.  

Let $G'$ be the graph obtained from $H_4\cup H_5$ by adding edges between 
$v_{m+1}$ and $v_1,v_2,\ldots,v_{d_{m+1}-m}$.
Again, we see that $D(G')=D(G)$ and $v_{m+1}$ is adjacent to all other vertices of $B$.
Together with the matching $M$, this yields a subdivision of $K_{m+1}$, where
each edge is subdivided at most once and all the subdivided edges are incident with $v_{m+1}$. We conclude that $h_1(G')\ge m+1$.
\end{proof}


\begin{thebibliography}{99}

\bibitem{Be} C. Berge, Graphs and Hypergraphs,
Second revised edition, North-Holland and Elsevier, 1976.

\bibitem{Cat} P. Catlin, 
Haj\'os' graph-coloring conjecture: variations and counterexamples, 
J. Combin. Theory Ser. B 26 (1979) 268--274.

\bibitem{CS} M. Chudnovsky, P. Seymour,
The proof of Rao's conjecture on degree sequences, 
talk at the Banff workshop on Graph Minors, 
BIRS, Banff, Alberta, September 28 -- October 3, 2008.

\bibitem{EG} P. Erd\H{o}s, T. Gallai,
Graphen mit Punkten vorgeschriebenen Grades,
Mat. Lapok. 11 (1960) 264--274.

\bibitem{Rao1} A.~R. Rao,
The clique number of a graph with given degree sequence, 
in: A.R. Rao (Ed.), Proceedings of the Symposium on Graph Theory, 
MacMillan and Co. India Ltd., I.S.I. Lecture Notes Series,
vol.~4, 1979, pp. 251--267.

\bibitem{Rao2} A.~R. Rao,
An Erd\H{o}s-Gallai type result on the clique number of a realization of a degree sequence,
unpublished.

\bibitem{R} B. Reed, 
$\omega$, $\Delta$, and $\chi$, J. Graph Theory 27 (1998) 177--212.

\bibitem{Ro} N. Robertson,
On Rao's Conjecture, 
a talk at the workshop Graph Theory, Oberwolfach, January 2005.
% Maybe Oberwolfach March 2007? Otherwise Sandbjerg Manor 2008.

\bibitem{RS20} N.~Robertson and P.~D.~Seymour,
Graph minors. XX. Wagner's Conjecture, 
J.~Combin.\ Theory Ser.~B 92 (2004) 325--357.

\bibitem{RS} N. Robertson, Z. Song,
Hadwiger number and chromatic number for near regular degree sequences,
J. Graph Theory, in press.

\bibitem{Th1} C. Thomassen, 
Some remarks on Haj\'os' conjecture,
J. Combin. Theory Ser. B 93 (2005) 95--105. 

\bibitem{Th2} C. Thomassen, 
Haj\'os' conjecture for line graphs,
J. Combin. Theory Ser. B 97 (2007) 156--157. 

\bibitem{YL} J.-H. Yin, J.-S. Li, 
Two sufficient conditions for a graphic sequence to have a realization with prescribed clique size,
Discrete Math. 301 (2005) 218--227.

\end{thebibliography}
\end{document}